\DeclareMathOperator{\gl}{gl}
\DeclareMathOperator{\res}{res}
\DeclareMathOperator{\tr}{tr}
\newcommand{\sm}{\wedge} 
\newcommand{\tensor}{\otimes} 
\newcommand{\iso}{\cong} 
\renewcommand{\to}{\longrightarrow}
\newcommand{\bs}{\backslash} 
\newcommand{\upi}{\underline{\pi}}
\newcommand{\gh}[1]{\llbracket #1\rrbracket}
\newcommand{\mA}{\mathbb A}
\newcommand{\mC}{\mathbb C}
\newcommand{\mF}{\mathbb F}
\newcommand{\mH}{\mathbb H}
\newcommand{\mK}{\mathbb K}
\newcommand{\mR}{\mathbb R}
\newcommand{\mZ}{\mathbb Z}
\newcommand{\bA}{\mathbf A}
\newcommand{\bL}{\mathbf L}
\newcommand{\bMO}{\mathbf{MO}}
\newcommand{\bMU}{\mathbf{MU}}
\newcommand{\xra}{\xrightarrow}
\newcommand{\Fin}{\mathcal F in}
\newcommand{\GH}{{\mathcal{GH}}}
\newcommand{\SH}{{\mathcal{SH}}}
\newcommand{\Ab}{{\mathcal A}b}
\numberwithin{equation}{section}
\newtheorem{thm}[equation]{Theorem}
\newtheorem{prop}[equation]{Proposition}
\newtheorem{cor}[equation]{Corollary}
\theoremstyle{definition}
\newtheorem{rk}[equation]{Remark}
\newtheorem{eg}[equation]{Example}
\newtheorem{con}[equation]{Construction}
\begin{document}

\title[Splittings of global Mackey functors and regularity of equivariant Euler classes]
{Splittings of global Mackey functors\\and regularity of equivariant Euler classes}

\date{\today; 2020 AMS Math.\ Subj.\ Class.: 55N91, 55P91, 55Q91}

\author{Stefan Schwede}
\address{Mathematisches Institut, Universit\"at Bonn, Germany}
\email{schwede@math.uni-bonn.de}

\begin{abstract}
  We establish natural splittings for the values of global Mackey functors
  at orthogonal, unitary and symplectic groups.
  In particular, the restriction homomorphisms between
  the orthogonal, unitary and symplectic groups of adjacent dimensions are naturally split epimorphisms.

  The interest in the splitting comes from equivariant stable homotopy theory.
  The equivariant stable homotopy groups of every global spectrum form
  a global Mackey functor, so the splittings imply that certain long exact homotopy group sequences
  separate into short exact sequences.
  For the real and complex global Thom spectra $\bMO$ and $\bMU$, the
  splittings imply the regularity of various Euler classes related to the tautological
  representations of $O(n)$ and $U(n)$.
\end{abstract}

\maketitle

\section*{Introduction}

The purpose of this paper is to establish a splitting result for
the values of global Mackey functors at orthogonal, unitary and symplectic groups.
As a corollary, we derive regularity properties
of equivariant Euler classes related to the tautological
representations of $O(n)$ and $U(n)$.
For this introduction, I'll concentrate on the unitary case,
where the splitting includes the following statement:\medskip

{\bf Theorem.}
{\em For every global functor $F$ and every $n\geq 1$,
the restriction homomorphism 
\[ \res^{U(n)}_{U(n-1)}\ :\ F(U(n))\ \to\  F(U(n-1))  \]
  is a naturally split epimorphism.}\medskip

The group $F(U(n))$ then naturally splits
as the direct sum of the kernels of the restriction homomorphisms
$\res^{U(k)}_{U(k-1)}:F(U(k))\to F(U(k-1))$ for $k=0,\dots,n$, by induction.
The above theorem is included in Theorem \ref{thm:GF splitting},
where we exhibit a specific natural splitting.
Besides the orthogonal, unitary and symplectic groups, 
the splitting also has an analog for symmetric groups, see Remark \ref{rk:symmetric};
this case is a direct generalization of Dold's arguments
\cite{dold:decomposition} from group cohomology to global functors.
The splittings do {\em not} have analogs for alternating, special orthogonal
and special unitary groups, see Section \ref{sec:nonex}.\smallskip

The interest in the splitting comes from equivariant stable homotopy theory.
Indeed, for every global equivariant spectrum $X$,
i.e., an object of the global stable homotopy category \cite[Section 4]{schwede:global},
the collection of equivariant homotopy groups $\pi_*^G(X)$
naturally forms a $\mZ$-graded global functor as $G$ varies over all compact Lie groups.
Hence the restriction homomorphism
$\res^{U(n)}_{U(n-1)}:\pi_*^{U(n)}(X)\to \pi_*^{U(n-1)}(X)$ is a naturally split epimorphism.
This is a genuinely global phenomenon: as we illustrate in Example \ref{eg:counter example},
this restriction homomorphism is not surjective for general $U(n)$-spectra.

An interesting special case is the global Thom spectrum $\bMU$
defined in \cite[Example 6.1.53]{schwede:global}.
For every compact Lie group $G$, the underlying $G$-homotopy type of $\bMU$
is that of tom Dieck's homotopical equivariant bordism \cite{tomDieck:bordism_integrality}.
This equivariant version of the complex bordism spectrum has been the object of much study,
as it is related to equivariant bordism of stably almost complex manifolds
\cite{loeffler-bordismengruppen, tomDieck:bordism_integrality},
equivariant complex-oriented cohomology theories \cite{cole-greenlees-kriz:universal, greenlees:survey},
and equivariant formal groups
laws \cite{cole-greenlees-kriz:equivariant_formal, greenlees:survey, hausmann:group_law}. 
Our second main result is:\medskip

{\bf Theorem.}
{\em For all $k_1,\dots,k_m\geq 1$, the Euler class of the tautological representation
  of $U(k_1)\times \dots\times U(k_m)$ on $\mC^{k_1+\dots+k_m}$ is a non zero-divisor
  in the graded ring $\bMU^*_{U(k_1)\times \dots\times U(k_m)}$.
}\medskip

This result will be proved in Corollary \ref{cor:block regularity}.
As we explain in Section \ref{sec:regularity},
the regularity property is a relatively direct consequence of the
surjectivity of the restriction homomorphisms
$\res^{U(n)}_{U(n-1)}:\bMU^*_{U(n)}\to \bMU^*_{U(n-1)}$.
For $n=1$ and $n=2$, the surjectivity of the restriction homomorphisms,
and hence the regularity of the Euler classes,
were previously known and have a more direct proof.
Indeed, the standard embeddings $U(0)\to U(1)$ and $U(1)\to U(2)$ admit retractions by continuous
group homomorphisms; inflation along such a retraction then provides a splitting
to the restriction homomorphism.
For $n\geq 3$, however, the embedding $U(n-1)\to U(n)$
does not admit such a retraction, and the splitting only exists after passage
to unreduced suspension spectra of the global classifying spaces.
To the best of the author's knowledge, ours is the first general regularity result for
unitary groups of arbitrary rank.

Many of the major structural results about equivariant homotopical bordism
are so far only known for {\em abelian} compact Lie groups,
and regularity properties of Euler classes play an important role.
Examples of such results include the following:

\begin{itemize}
\item For abelian compact Lie groups, the ring $\bMU^*_G$ is a free module
  over the non-equivariant homotopy ring $\bMU^*$ and concentrated in even degrees
  \cite[Theorem 5.3]{comezana}, \cite{loeffler-equivariant}.
\item The $MU$-cohomology of the non-equivariant classifying space of
  an abelian compact Lie group is the completion of $\bMU^*_G$ at the augmentation ideal
  \cite[Theorem 1.1]{comezana-may}, \cite{loeffler-bordismengruppen}.
\item For abelian compact Lie groups, $\bMU^*_G$ carries the universal
  $G$-equivariant formal group law \cite[Theorem A]{hausmann:group_law}.
\item The collection of rings  $\bMU^*_G$ for all abelian compact Lie groups
  carries the universal global formal group \cite[Theorem C]{hausmann:group_law}.
\end{itemize}
I hope that our regularity results might be useful to understand if and how
the above results generalize from abelian compact Lie groups to general compact Lie groups.

Our splittings for global functors translate directly into stable global splittings
of the global classifying spaces $B_{\gl}O(n)$, $B_{\gl}U(n)$ and $B_{\gl}S p(n)$
of the orthogonal, unitary and symplectic groups, see Corollary \ref{cor:split_B_gl}.
On underlying non-equivariant homotopy types,
the stable splittings of $B U(n)$ and $B S p(n)$ are due to Snaith,
see \cite[Theorem 4.2]{snaith:algebraic_cobordism}, \cite[Theorem 2.2]{snaith:localied_stable},
and the stable splitting of $B O(n)$
was constructed by Mitchell and Priddy \cite[Theorem 4.1]{mitchell-priddy:double_coset}.
If $G$ is a compact Lie group, we can apply the forgetful functor
from the global to the genuine $G$-equivariant stable homotopy category,
compare \cite[Theorem 4.5.24]{schwede:global}.
We obtain $G$-equivariant stable splittings of the classifying $G$-spaces
for $G$-equivariant real, complex and quaternionic vector bundles;
as far as I know, these splittings are new.
\smallskip

{\bf Acknowledgments.} I would like to thank Markus Hausmann, John Greenlees and Peter May
for valuable feedback on this paper.
The research in this paper was supported by the DFG Schwerpunktprogramm 1786 ‘Homotopy Theory
and Algebraic Geometry’ (GZ SCHW 860/1-2) and by the Hausdorff Center for Mathematics at the
University of Bonn (DFG GZ 2047/1, project ID 390685813).

\section{The splitting}

In this section we formulate and prove our first main result, Theorem \ref{thm:GF splitting},
the natural splitting of the values of a global functor at orthogonal, unitary and symplectic groups.
We recall that a {\em global functor} in the sense of \cite[Definition 4.2.2]{schwede:global}
is an additive functor from the global Burnside category of \cite[Construction 4.2.1]{schwede:global}
to the category of abelian groups. 
In more explicit terms, a global functor specifies values on all compact Lie groups,
restriction homomorphisms along continuous group homomorphisms,
and transfers along inclusions of closed subgroups;
this data has to satisfy a short list of explicit relations that can be found after
Theorem 4.2.6 of \cite{schwede:global}.
The data of such a global functor is equivalent to
that of a {\em `functor with regular Mackey
structure'} in the sense of Symonds \cite[\S 3, p.177]{symonds-splitting}.

The proof of our splitting in Theorem \ref{thm:GF splitting}
is inspired by Dold's elegant proof \cite{dold:decomposition}
of Nakaoka's splitting \cite{nakaoka:decomposition}
of the cohomology of symmetric groups.
We generalize Dold's strategy in three ways:
\begin{itemize}
\item from symmetric groups to orthogonal, unitary and symplectic groups,
\item from the non-equivariant to the global context, and
\item from group cohomology to general global functors.
\end{itemize}
The proof of our splittings relies on the full global structure,
as the splitting maps involve restriction, inflation, and transfers.
\smallskip

We let $\mK$ be one of the real division algebras $\mR, \mC$ or $\mH$.
We denote by $G(n)$ the compact Lie group of $(n\times n)$-matrices $A$ over $\mK$
that satisfy $A\cdot \bar A^t=\bar A^t\cdot A=E_n$,
where $\bar A^t$ is the conjugate transpose matrix
(for $\mK=\mR$, conjugation is the identity).
So $G(n)$ is the orthogonal group $O(n)$ for $\mK=\mR$,
it is the unitary group $U(n)$ for $\mK=\mC$,
and it is the symplectic group $S p(n)$ for $\mK=\mH$.

We let $F$ be a global functor.
We write
\[ i_n \ : \ G(n-1)\ \to \ G(n) \ , \ \quad A \ \longmapsto \
  (\begin{smallmatrix}A & 0 \\ 0 & 1 \end{smallmatrix})
\]
for the standard embedding. This continuous monomorphism induces a restriction
operation $i_n^*$, which is a morphism from $G(n)$ to $G(n-1)$
in the global Burnside category. The global functor sends it to
a restriction homomorphism
\[ F(i_n^*) \ : \ F(G(n))\ \to \ F(G(n-1))\ .\]
We write 
\[ \tr_{m,n}\ : \ F(G(m)\times G(n))\ \to \ F(G(m+n)) \]
for the transfer homomorphism associated to the continuous monomorphism
\[  \mu_{m,n}\ :\ G(m)\times G(n)\ \to\ G(m+n) \ , \quad (A,B)
\ \longmapsto \
  (\begin{smallmatrix}A & 0 \\ 0 & B \end{smallmatrix})\ .\]

We recall the double coset formula for the subgroups
$G(n-1)$ and $G(k)\times G(n-k)$ inside $G(n)$.
The following result ought to be well-known to experts;
in the unitary situation, the second summand in the following
double coset formula actually vanishes, and similar double coset formulas
were established in \cite[Example IV.9]{feshbach} and \cite[Lemma 4.2]{symonds-splitting}.

\begin{prop}\label{prop:double coset U}
  Let $\mK$ be one of the real division algebras $\mR, \mC$ or $\mH$,
  and let $F$ be a global functor.
  Then for every $1\leq k\leq n-1$, the relation
  \begin{align*}
 F(i_n^*)\circ \ &\tr_{k,n-k}\ = \\  
  &\tr_{k,n-k-1}\circ F( (G(k)\times i_{n-k})^*)\
  -\   \tr_{\Delta}\circ F( (i_k\times i_{n-k})^*)\
  + \ \tr_{k-1,n-k}\circ F( (i_k\times G(n-k))^*)
  \end{align*}
  holds as homomorphisms $F(G(k)\times G(n-k))\to F(G(n-1))$, where
  $\tr_\Delta$ denotes the transfer along the closed embedding
  \[ \Delta \ : \ G(k-1)\times G(n-k-1)\ \to \ G(n-1)\ , \quad
(A,B)\ \longmapsto \  \left(\begin{matrix} 
    A & 0 & 0 \\ 
    0 & 1 & 0\\ 
    0 & 0 & B
  \end{matrix}\right)\ .\]
\end{prop}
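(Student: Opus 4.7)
The plan is to compute both sides of the identity geometrically, by analyzing the $G(n-1)$-orbit structure on the Grassmannian $\mathrm{Gr}_k(\mK^n)$ of $k$-dimensional $\mK$-subspaces of $\mK^n$. I identify $G(n)/[G(k)\times G(n-k)]$ with $\mathrm{Gr}_k(\mK^n)$ as a $G(n)$-space; then the transfer $\tr_{k,n-k}$ is precisely the one represented by this $G(n)$-manifold, and $F(i_n^*)\circ \tr_{k,n-k}$ is the Burnside-category operation obtained by viewing $\mathrm{Gr}_k(\mK^n)$ as a $G(n-1)$-manifold and decomposing it along its $G(n-1)$-orbit stratification.

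To describe the stratification, I would introduce the continuous $G(n-1)$-invariant function $\phi:\mathrm{Gr}_k(\mK^n)\to[0,1]$ given by $\phi(V)=|\langle u,e_n\rangle|^2$, where $u$ is a unit vector spanning the line $V\cap (V\cap \mK^{n-1})^{\perp}$ (with $\phi(V)=0$ when $V\subseteq\mK^{n-1}$). Its level sets are as follows: $\phi^{-1}(0)=\mathrm{Gr}_k(\mK^{n-1})$ is a single $G(n-1)$-orbit with stabilizer $G(k)\times G(n-1-k)$; $\phi^{-1}(1)=\{W\oplus \mK e_n:W\in\mathrm{Gr}_{k-1}(\mK^{n-1})\}$ is a single orbit with stabilizer $G(k-1)\times G(n-k)$; and for each $t\in(0,1)$ the level set $\phi^{-1}(t)$ is a single orbit with stabilizer of type $\Delta(G(k-1)\times G(n-k-1))$. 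Writing $A=\phi^{-1}([0,1/2])$, $B=\phi^{-1}([1/2,1])$ and $C=\phi^{-1}(1/2)$, the pieces $A$ and $B$ are $G(n-1)$-equivariant tubular neighborhoods of the extremal orbits and deformation retract onto them, while $C$ is the middle orbit. The $G(n-1)$-equivariant Mayer--Vietoris cofiber sequence $C_+\to A_+\vee B_+\to \mathrm{Gr}_k(\mK^n)_+$ then translates, via the stable global structure of the Burnside category, into an additive identity of natural transformations $F(G(k)\times G(n-k))\to F(G(n-1))$: the left-hand side $F(i_n^*)\circ \tr_{k,n-k}$ equals the sum of the three transfer-restriction composites coming from the three strata, with signs $+1$ for the closed and limit strata and $-1$ for the middle one.

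The remaining step, which requires careful bookkeeping, is verifying that each of the three composites is exactly the one named in the proposition. For each stratum I choose a coset representative $g\in G(n)$ whose orbit $g\cdot\mK^k$ lies in the stratum, and then identify the stabilizer $G(n-1)\cap g[G(k)\times G(n-k)]g^{-1}$ with a subgroup of $G(k)\times G(n-k)$ via conjugation by $g$. Taking $g=\Id$ for the closed stratum, $g$ a rotation in the $(e_k,e_n)$-plane for the middle stratum, and $g$ a cyclic permutation sending $e_k$ to $e_n$ for the limit stratum, a direct calculation shows that the resulting inclusions into $G(k)\times G(n-k)$ are exactly $G(k)\times i_{n-k}$, $i_k\times i_{n-k}$, and $i_k\times G(n-k)$ respectively, which recovers the three restriction homomorphisms stated in the formula.
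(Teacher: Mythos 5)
Your approach is essentially the same as the paper's. Both arguments hinge on the same geometric fact: the $G(n-1)$-orbit space of $G(n)/(G(k)\times G(n-k))\cong\mathrm{Gr}_k(\mK^n)$ is an interval, with the two endpoint orbits (subspaces contained in $\mK^{n-1}$, resp.\ containing $e_n$) and a one-parameter family of principal orbits in between; and the three stabilizers you compute, together with the conjugations by the chosen coset representatives, match the paper's $G(k,n-k-1,\sharp)$, $G(k-1,\sharp,n-k-1,\sharp)$, $G(k-1,\sharp,n-k)$ and produce the three restriction maps $(G(k)\times i_{n-k})^*$, $(i_k\times i_{n-k})^*$, $(i_k\times G(n-k))^*$. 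Your function $\phi$ is just a cleaner packaging of the paper's explicit path $\gamma(t)$.

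Where you diverge is in the passage from this orbit-type stratification to the algebraic identity. The paper simply invokes the compact-Lie double coset formula (\cite[Theorem 3.4.9]{schwede:global}, \cite[IV \S 6]{lms}), which produces the three-term sum directly, with the $-1$ on the middle term arising as the internal Euler characteristic of the open stratum. You instead try to rederive the formula from the Mayer--Vietoris decomposition $\mathrm{Gr}_k(\mK^n)=A\cup B$, $C=A\cap B$, asserting that the cofiber sequence $\Sigma^\infty_+C\to\Sigma^\infty_+A\vee\Sigma^\infty_+B\to\Sigma^\infty_+\mathrm{Gr}_k(\mK^n)$ ``translates into an additive identity of natural transformations.'' That last step is not a formal consequence of having a cofiber sequence: what you are really using is the equivariant additivity theorem for Becker--Gottlieb transfers, plus the invariance of the transfer under the equivariant deformation retractions $A\simeq\phi^{-1}(0)$ and $B\simeq\phi^{-1}(1)$, plus the compatibility of the abstract transfer $\tr_{k,n-k}$ in the global Burnside category with the geometric Pontryagin--Thom transfer of the $G(n-1)$-manifold $\mathrm{Gr}_k(\mK^n)$. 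These are exactly the ingredients whose combination \emph{is} the double coset formula, so your route is not actually shorter or more elementary; you should either cite the additivity theorem at this point, or (as the paper does) cite the double coset formula and let it do the bookkeeping. With that citation supplied, your orbit and stabilizer computations are correct and the proof goes through.
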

\begin{proof}
  We write $G(n-1,\sharp)$ for the image of the embedding
  $i_n:G(n-1)\to G(n)$, and we write $G(k,n-k)$
  for the closed subgroup of those block matrices of $G(n)$
  of the form $(\begin{smallmatrix}A & 0 \\ 0 & B \end{smallmatrix})$
  for $(A,B)\in G(k)\times G(n-k)$.
  The double coset space $G(n-1,\sharp)\bs G(n)/G(k,n-k)$
  is a closed interval; more precisely, the map
    \[ [0,\pi]\ \to \ G(n-1,\sharp) \bs G(n) / G(k,n-k) \ ,\quad
    t \ \mapsto\ G(n-1,\sharp)\cdot \gamma(t) \cdot G(k,n-k)   \]
with
  \[
  \gamma(t)\ = \  
    \begin{pmatrix} 
      E_{k-1}  & 0 & 0 & 0 \\
      0 & \cos(t) & 0 & -\sin(t)\\ 
      0 & 0 & E_{n-k-1} & 0\\ 
      0 & \sin(t) & 0 & \cos(t)\end{pmatrix}
  \]
  is a homeomorphism.

  For $t=0$, the stabilizer of the right coset $\gamma(0)\cdot G(k,n-k)$
  under the left $G(n-1,\sharp)$-action is the subgroup $G(k,n-k-1,\sharp)$
  consisting of all matrices of the form
  \[  \left(\begin{matrix} 
        A & 0 & 0 \\ 
        0 & B & 0\\ 
        0 & 0 & 1
      \end{matrix}\right) \]
  with $(A,B)\in G(k)\times G(n-k-1)$.
  For $t=\pi$, the left stabilizer of the right coset $\gamma(\pi)\cdot G(k,n-k)$
  is the subgroup $G(k-1,\sharp,n-k)$
  consisting of all matrices of the form
  \[  \left(\begin{matrix} 
        A & 0 & 0 \\ 
        0 & 1 & 0\\ 
        0 & 0 & B
      \end{matrix}\right) \]
  with $(A,B)\in G(k-1)\times G(n-k)$.
  For $t\in (0,\pi)$,
  the left stabilizer of the right coset $\gamma(t)\cdot G(k,n-k)$
  is the subgroup
  \[  G(k-1,\sharp,n-k-1,\sharp)\ = \ G(k,n-k-1,\sharp)\cap G(k-1,\sharp,n-k)\ . \]
  This shows that the orbit type decomposition is as $ \{0\}\cup (0,\pi)\cup \{\pi\}$.
  
  The double coset formula \cite[IV \S 6]{lms}, \cite[Theorem 3.4.9]{schwede:global}
  thus has three summands. The first summand indexed by $\gamma(0)=E_n$ contributes
  \[ \tr^{G(n-1,\sharp)}_{G(k,n-k-1,\sharp)}\circ\res^{G(k,n-k)}_{G(k,n-k-1,\sharp)}\ . \]
  Under the identification of $G(k)\times G(n-k-1)$ with $G(k,n-k-1,\sharp)$,
  this becomes the term
  \[ \tr_{k,n-k-1}\circ F( (G(k)\times i_{n-k})^*)\ . \]
  The summand indexed by $(0,\pi)$ occurs with coefficient $-1$,
  the internal Euler characteristic of the open interval.
  For $t\in (0,\pi)$, the matrix $\gamma(t)$ centralizes the subgroup $G(k-1,\sharp,n-k-1,\sharp)$;
  so in every global functor, the corresponding conjugation homomorphism
  $c_{\gamma(t)}^*$ is the identity.  
  The second contribution to the double coset formula is thus
  \[   -\   \tr_{G(k-1,\sharp,n-k-1,\sharp)}^{G(n-1,\sharp)}\circ\res^{G(k,n-k)}_{G(k-1,\sharp,n-k-1,\sharp)}\ . \]
  In the notation of the theorem, this becomes the term
  $-\tr_\Delta\circ F( (i_k\times i_{n-k})^*)$.
  The third summand indexed by $\gamma(\pi)$ contributes
  \[ \tr^{G(n-1,\sharp)}_{G(k-1,n-k,\sharp)}\circ c_{\gamma(\pi)}^*\circ\res^{G(k,n-k)}_{G(k-1,\sharp,n-k)}\ . \]
  The following square of group homomorphisms commutes:
  \[ \xymatrix{
      G(k-1)\times G(n-k)\ar[rr]^-{i_k\times G(n-k)}\ar[d]_{\mu_{k-1,n-k}} &&
      G(k)\times G(n-k)\ar[d]^{\mu_{k,n-k}} \\
      G(n-1)\ar[r]_-{i_n^*} & G(n)\ar[r]_{c_{\gamma(\pi)}}& G(n)
    } \]
  So under the identification of $G(k-1)\times G(n-k)$ with $G(k-1,n-k,\sharp)$,
  the third summand becomes the term $\tr_{k-1,n-k}\circ F( (i_k\times G(n-k))^*)$.
\end{proof}

\begin{rk}
  In the unitary and symplectic case, i.e., when the skew field is $\mC$ or $\mH$,
  the transfer $\tr_\Delta:F(G(k-1)\times G(n-k-1))\to F(G(n-1))$ that occurs
  in the double coset formula of Proposition \ref{prop:double coset U} is actually zero.
  Indeed, the image of the embedding $\Delta:G(k-1)\times G(n-k-1)\to G(n-1)$
  is centralized by the subgroup of matrices of the form
  \[
    \left(\begin{matrix} 
        E_{k-1} & 0 & 0 \\ 
        0 & \lambda & 0\\ 
        0 & 0 & E_{n-k-1}
      \end{matrix}\right)\ ,\]
  a group isomorphic to $G(1)$.
  Since the groups $U(1)$ and $Sp(1)$ have positive dimension,
  the Weyl group of the image of $\Delta$  has positive dimension.
  The transfer $\tr_{\Delta}$ is thus trivial.
  In the case of the orthogonal groups, the second summand in the double coset formula
  of Proposition \ref{prop:double coset U} is generically non-zero.
\end{rk}

\begin{con}
  As before we let $F$ be a global functor in the sense of \cite[Definition 4.2.2]{schwede:global}.  
  We now formulate the splittings of $F(O(n))$, $F(U(n))$ and $F(S p(n))$.
  We write
  \begin{align*}
    F(O;k)\ &= \ \ker( F(i_k^*):F(O(k))\to F(O(k-1)) )\ ,  \\
    F(U;k)\ &= \ \ker( F(i_k^*):F(U(k))\to F(U(k-1)) )  \text{\quad and}\\
    F(S p;k)\ &= \ \ker( F(i_k^*):F(S p(k))\to F(S p(k-1)) ) 
  \end{align*}
  for the kernels of the restriction homomorphism along $i_k$.
  For $k=0$, we interpret this as $F(O;0)=F(U;0)=F(S p;0)=F(e)$,
  the value of $F$ at the trivial group.
  For $0\leq k\leq n$, we write 
  \[ p_{k,n-k}^*\ : \ F(O(k))\ \to \ F(O(k)\times O(n-k))
    % \text{\qquad and\qquad}
    % p_{k,l}^*\ : \ F(S p(k))\ \to \ F(S p(k)\times S p(l))
  \]
for the inflation homomorphism associated to the projection to the first factor.
We define a natural homomorphism 
\[ \psi_{k,n}\ : \ F(O;k)\ \to \ F(O(n))    \]
as the following composite  
\[ F(O;k)\ \xra{\text{inclusion}} \ F(O(k)) \ \xra{\ p_{k,n-k}^*\ }\
  F(O(k)\times O(n-k)) \ \xra{\tr_{k,n-k}}\  F(O(n))   \ ,\]
and similarly for
\[ \psi_{k,n}\ : \ F(U;k)\ \to \ F(U(n))   \text{\qquad and\qquad}
   \psi_{k,n}\ : \ F(S p;k)\ \to \ F(S p(n))   \ . \]
Because the group $O(0)$ is trivial, the map $\psi_{0,n}$ specializes to
inflation along the unique homomorphism $O(n)\to O(0)$, and
$\psi_{n,n}$ is the inclusion $F(O;n)\to F(O(n))$.
\end{con}

\begin{thm}\label{thm:GF splitting}
  For every global functor $F$, and every $n\geq 1$, the maps
  \begin{align*}
    \sum \psi_{k,n}&\colon  \bigoplus_{k=0}^n \, F(O;k)\ \to \  F(O(n))\ ,\\
    \sum \psi_{k,n} &\colon  \bigoplus_{k=0}^n \, F(U;k)\ \to \  F(U(n))\text{\quad and}\\
    \sum \psi_{k,n}&\colon \bigoplus_{k=0}^n \, F(S p;k)\ \to \  F(S p(n))
  \end{align*}
  are isomorphisms of abelian groups, and
  the restriction homomorphisms
  \begin{align*}
   F(i_n^*) &\colon F(O(n))\to F(O(n-1)) \ , \\
    F(i_n^*) &\colon F(U(n))\to F(U(n-1))\text{\qquad and}\\
    F(i_n^*) &\colon F(S p(n))\to F(S p(n-1))
  \end{align*}
  are naturally split epimorphism.
\end{thm}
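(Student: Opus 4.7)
My plan is to prove the three assertions uniformly by induction on $n$, letting $G(n)$ stand for $O(n)$, $U(n)$ or $Sp(n)$ throughout. The base case $n=0$ is trivial, since both sides reduce to $F(e)$ and the map is the identity. The entire inductive step will rest on a single identity:
\[ F(i_n^*) \circ \psi_{k,n}|_{F(G;k)}\; =\; \begin{cases} \psi_{k,n-1} & \text{if } 0 \le k \le n-1, \\ 0 & \text{if } k = n. \end{cases} \]

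To verify this identity for $1 \le k \le n-1$ and $x \in F(G;k)$, I would apply the double coset formula of Proposition \ref{prop:double coset U} to $p_{k,n-k}^*(x)$. By contravariance of inflation, the composites $F((i_k \times i_{n-k})^*) \circ p_{k,n-k}^*$ and $F((i_k \times G(n-k))^*) \circ p_{k,n-k}^*$ are inflations along group homomorphisms that factor through $i_k:G(k-1)\to G(k)$; thus both annihilate $x\in\ker F(i_k^*)$, and the second and third terms of the double coset formula vanish. The surviving first term simplifies, again by functoriality, to $\tr_{k,n-k-1}(p_{k,n-k-1}^*(x))=\psi_{k,n-1}(x)$. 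The boundary cases $k=0$ (pure inflation along $G(n)\to e$, compatible with $i_n$) and $k=n$ (where $\psi_{n,n}$ is the inclusion of $\ker F(i_n^*)$) are immediate.

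Once the identity is in hand, the induction closes by a five-lemma chase on the diagram
\[
\xymatrix@C=5mm{
0 \ar[r] & F(G;n) \ar@{=}[d] \ar[r] & \bigoplus_{k=0}^n F(G;k) \ar[d]^-{\sum \psi_{k,n}} \ar[r] & \bigoplus_{k=0}^{n-1} F(G;k) \ar[r] \ar[d]^-{\sum \psi_{k,n-1}}_-{\iso} & 0 \\
0 \ar[r] & F(G;n) \ar[r] & F(G(n)) \ar[r]_-{F(i_n^*)} & F(G(n-1)) \ar[r] & 0.
}
\]
The top row is split exact by construction, the right vertical is an isomorphism by the inductive hypothesis, and commutativity of the right square is exactly the displayed identity. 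Since the right vertical is iso, the composite $F(i_n^*)\circ(\sum\psi_{k,n})$ restricted to the summands with $k\le n-1$ already surjects onto $F(G(n-1))$; this simultaneously proves the splitting statement (with natural section $s = (\sum \psi_{k,n}) \circ \iota \circ (\sum \psi_{k,n-1})^{-1}$) and makes the bottom row short exact. The five lemma then yields that the middle vertical $\sum\psi_{k,n}$ is an isomorphism.

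The substantive geometric input has already been absorbed into Proposition \ref{prop:double coset U}; the only potentially delicate point in the induction is the careful bookkeeping of the three terms of the double coset formula, but both cross-terms vanish for the clean structural reason that their underlying group homomorphisms factor through $i_k:G(k-1)\to G(k)$. Naturality of the splitting in $F$ is automatic, since every ingredient $\psi_{k,n}$ is assembled entirely from inflation, restriction, and transfer, and the induced inverse is therefore also natural.
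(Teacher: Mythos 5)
Your proposal is correct and follows essentially the same route as the paper: precompose the double coset formula of Proposition~\ref{prop:double coset U} with $p_{k,n-k}^*\circ\mathrm{incl}$, observe that the two cross-terms kill elements of $\ker F(i_k^*)$ because the relevant restrictions/inflations factor through $i_k$, obtain $F(i_n^*)\circ\psi_{k,n}=\psi_{k,n-1}$ on $F(G;k)$, and close the induction by a diagram chase on the two short exact sequences. The only cosmetic difference is that the paper writes out the intermediate functoriality identities explicitly rather than appealing to factorization through $i_k$, but the substance is identical.
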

\begin{proof}
  We give the argument in the orthogonal case;
  the unitary and symplectic cases are analogous.
  We suppose that $1\leq k\leq n-1$. 
  We precompose the double coset formula of Proposition \ref{prop:double coset U}
  with the homomorphism $p_{k,n-k}^*\circ\text{incl}:F(O;k)\to F(O(k)\times O(n-k))$.
  We observe that the last two of the three summands on the right hand side compose trivially.
  Indeed,
  \[ \tr_{\Delta}\circ F( (i_k\times i_{n-k})^*)\circ p_{k,n-k}^*\circ\text{incl}\
   = \     \tr_{\Delta}\circ p_{k-1,n-k-1}^*\circ F(i_k^*)\circ  \text{incl}\ = \ 0 \ , \]
 and
 \[ \tr_{k-1,n-k}\circ F( (i_k\times O(n-k))^*)\circ p_{k,n-k}^*\circ\text{incl}\
   = \ \tr_{k-1,n-k}\circ p_{k-1,n-k}^*\circ F(i_k^*)\circ   \text{incl}\ = \ 0 \ . \]
 So the double coset formula implies the relation
  \begin{align*}
        F(i_n^*)\circ\psi_{k,n}\
    &= \ F(i_n^*)\circ\tr_{k,n-k}\circ p_{k,n-k}^*\circ\text{incl} \\
  &= \ \tr_{k,n-k-1}
  \circ F( (O(k)\times i_{n-k})^*)\circ p_{k,n-k}^*\circ\text{incl}\\
    &= \ \tr_{k,n-k-1}\circ p_{k,n-k-1}^*\circ\text{incl} \ =\ \ \psi_{k,n-1}\ .
  \end{align*}
  This final relation also holds for $k=0$ by direct inspection, i.e.,
  $F(i_n^*)\circ\psi_{0,n}=\psi_{0,n-1}$.
  
  Now we can prove the claim by induction on $n$.
  The induction starts with $n=0$, where there is nothing to show.
  For $n\geq 1$ we obtain a commutative diagram
  \[ \xymatrix@C=15mm{
      0 \ar[r] & F(O;n) \ar[r]^-{\text{incl}} \ar@{=}[d] &
      \bigoplus_{k=0}^n F(O;k)\ar[r]^-{\text{proj}}\ar[d]^{\sum \psi_{k,n}} &
      \bigoplus_{k=0}^{n-1} F(O;k)\ar[r]\ar[d]_-\iso^{\sum \psi_{k,n-1}} & 0\\
   0 \ar[r] & F(O;n) \ar[r]_-{\text{incl}}&  F(O(n))\ar[r]_-{F(i_n^*)} & F(O(n-1))\ar[r] & 0
    } \]
  The upper row is exact by definition, and the lower row is exact at
  $F(O;n)$ and $F(O(n))$, also by definition. The right vertical map is an isomorphism
  by the inductive hypothesis; so the restriction map $F(i_n^*):F(O(n))\to F(O(n-1))$
  is in fact surjective, and the lower row is also exact.
  Since both rows are exact and the right vertical map is an isomorphism,
  the middle map is an isomorphism, too,
  and $F(i_n^*)$ is a naturally split epimorphism.
\end{proof}

\begin{rk}[Splitting for symmetric groups]\label{rk:symmetric}
  The splittings of Theorem \ref{thm:GF splitting} have an analog for symmetric groups as well.
  This case is a direct generalization of Dold's arguments
  \cite{dold:decomposition} from group cohomology to global functors.
  The argument for symmetric groups is significantly simpler
  than for orthogonal, unitary and symplectic groups
  because the analog of the double coset formula in Proposition \ref{prop:double coset U}
  is easier to derive. Indeed, the relevant double coset space
  $\Sigma_{n-1}\backslash \Sigma_n /\Sigma_{k,n-k}$ is discrete with two points,
  there is no need for an analysis of the orbit type stratification,
  and the relevant double coset formula is 
\[  \res^{\Sigma_n}_{\Sigma_{n-1}}\circ \ \tr_{\Sigma_{k,n-k}}^{\Sigma_n}\ = \  
  \tr_{\Sigma_{k,n-k-1}}^{\Sigma_{n-1}}\circ \res^{\Sigma_{k,n-k}}_{\Sigma_{k,n-k-1}}\
    + \ \tr_{\Sigma_{k-1,n-k}}^{\Sigma_{n-1}}\circ c_{(k, n)}^*\circ \res^{\Sigma_{k,n-k}}_{ (\Sigma_{k-1,n-k})^{(k, n)}}\ . \]
  Here $\Sigma_n$ is the $n$-th symmetric group,
  and we write $\Sigma_{k,n-k}$ for its subgroup 
  consisting of those permutations that leave the subsets $\{1,\dots,k\}$
  and $\{k+1,\dots,n\}$ invariant.
  We abuse notation by identifying $\Sigma_{n-1}$
  with the subgroup of $\Sigma_n$ of permutations that fix the last element $n$;
  finally $(k, n)$ is the transposition that interchanges $k$ and $n$.
  With this double coset formula at hand, the same argument as in
  the proof of Theorem \ref{thm:GF splitting} shows that for every global functor $F$
  and every $n\geq 1$, the analogously defined  map
  \begin{equation} \label{eq:split_symmetric}
  \sum \psi_{k,n}\ : \ \bigoplus_{k=0}^n \, F(\Sigma;k)\ \to \  F(\Sigma_n)   
  \end{equation}
  is an isomorphism,
  and the restriction homomorphism $F(i_n^*):F(\Sigma_n)\to F(\Sigma_{n-1})$
  is a naturally split epimorphism.
  
  In the case of symmetric groups, it suffices to consider a {\em $\Fin$-global functor},
  i.e., the analog of a global functor defined only on finite groups.
  These $\Fin$-global functors have been studied
  under different names in the algebraic literature,
  for example as  `inflation functors' in \cite[p.271]{webb},
  or as `global $(\emptyset,\infty)$-Mackey functors' in \cite{lewis-projective not flat}. 
  The $\Fin$-global functors are a special case of the more general
  class of `biset functors' \cite{bouc:biset}.
  We refer the reader to \cite[Remark 4.2.16]{schwede:global}
  for more details on the comparison.
  I would not be surprised if the splitting \eqref{eq:split_symmetric}
  was already known, possibly in different language, and published somewhere
  in the algebraic literature on the subject. However, I am not aware of a reference.
\end{rk}

Equivariant stable homotopy theory provides examples of global functors.
Indeed, for every global equivariant spectrum $X$,
i.e., an object of the global stable homotopy category \cite[Section 4]{schwede:global},
and every integer $m$, the collection of $m$-th equivariant stable homotopy groups $\pi_m^G(X)$
naturally forms a global functor as $G$ varies over all compact Lie groups.
Moreover, the preferred t-structure on the global stable homotopy category
shows that every global functor arises in this way, see \cite[Theorem 4.4.9]{schwede:global}.
The splittings of Theorem \ref{thm:GF splitting} show that for every global equivariant spectrum $X$,
the restriction homomorphism
$\res^{O(n)}_{O(n-1)}:\pi_*^{O(n)}(X)\to\pi_*^{O(n-1)}(X)$
is a naturally split epimorphism, and for every $0\leq k\leq n$
the graded abelian group $\pi_*^{O(k)}(X)$ is a natural direct summand
of $\pi_*^{O(n)}(X)$. And the analogous statements hold for unitary and symplectic groups.

\begin{eg}\label{eg:counter example}
  The surjectivity of the restriction homomorphism
  $\res^{O(n)}_{O(n-1)}:\pi_*^{O(n)}(X)\to\pi_*^{O(n-1)}(X)$
  is a special feature of global stable homotopy types,
  and it does not hold for general genuine $O(n)$-spectra.
  An easy example for $O(1)\iso \Sigma_2$ is given by the Eilenberg-MacLane spectrum $H M$
  for the $\Sigma_2$-Mackey functor $M$ with $M(\Sigma_2/e)=\mZ/2$ and $M(\Sigma_2/\Sigma_2)=0$.
  The $\Sigma_2$-equivariant stable homotopy groups of $H M$ vanish,
  while the 0-th non-equivariant stable homotopy group of $H M$ is non-trivial.
  In particular, restriction from $\Sigma_2$ to $\Sigma_1$,
  or from $O(1)$ to $O(0)$, is not surjective.
  
  A different kind of example for unitary groups
  is the unreduced suspension spectrum of the free and transitive $U(1)$-space.
  The Wirthm{\"u}ller isomorphism
  shows that the group  $\pi_0^{U(1)}(\Sigma^\infty_+ U(1))$ vanishes.
  The group $\pi_0^e(\Sigma^\infty_+ U(1))$ is isomorphic to $\mZ$,
  so restriction from $U(1)$ to $U(0)$ is not surjective on 0-th equivariant homotopy groups.
\end{eg}

\section{Stable splittings of global classifying spaces}

Snaith has shown that the unreduced suspension spectra of the classifying spaces
$B U(n)$ and $B S p(n)$ stably split into wedges of certain Thom spaces,
see \cite[Theorem 4.2]{snaith:algebraic_cobordism} and \cite[Theorem 2.2]{snaith:localied_stable}.
Mitchell and Priddy obtained such splittings by a different method in
\cite[Theorem 4.1]{mitchell-priddy:double_coset}, and their proof also applies
to stably split the classifying spaces $B O(n)$ and $B\Sigma_n$ of the orthogonal
and the symmetric groups.
Corollary \ref{cor:split_B_gl} below is a global refinement of this splitting,
referring to the unreduced suspension spectra of
the global classifying spaces $B_{\gl}O(n)$, $B_{\gl}U(n)$ and $B_{\gl}S p(n)$
as defined in \cite[Definition 1.1.27]{schwede:global}.
The splitting takes place in the global stable homotopy category $\GH$,
i.e., the localization of the category of orthogonal spectra at the class of
global equivalences \cite[Definition 4.1.3]{schwede:global}.
The global stable homotopy category is a compactly generated tensor triangulated
category, see \cite[Section 4.4]{schwede:global}.

\begin{con}[Global classifying spaces]
In the model of \cite{schwede:global}, unstable global homotopy types are represented
by orthogonal spaces.
Orthogonal spaces are continuous functors to spaces from the category $\bL$
of finite-dimensional inner product spaces and linear isometric embeddings,
compare \cite[Definition 1.1.1]{schwede:global}.
The category $\bL$ is also denoted $\mathscr I$ or $\mathcal I$ by other authors,
and orthogonal spaces are also known as $\mathscr I$-functors,
$\mathscr I$-spaces or $\mathcal I$-spaces.

An important example is the {\em global classifying space}
of a compact Lie group $G$, see \cite[Definition 1.1.27]{schwede:global}.
The construction involves a choice of faithful $G$-representation $V$,
and then
\[ B_{\gl}G \ = \ \bL(V,-)/G \]
is the orthogonal $G$-orbit space of the represented orthogonal space.
The unstable global homotopy type of $B_{\gl}G$ is independent of the choice of faithful
representation, and $B_{\gl}G$ `globally represents' principal $G$-bundles over equivariant
spaces, see \cite[Proposition 1.1.30]{schwede:global}.
In particular, the underlying non-equivariant homotopy type of $B_{\gl}G$ is
a classifying space for the Lie group $G$.

Every orthogonal space has an unreduced suspension spectrum,
compare \cite[Construction 4.1.7]{schwede:global}.
The suspension spectrum of $B_{\gl}G$ comes with a preferred $G$-equivariant homotopy class
\[ e_G \ \in \ \pi_0^G(\Sigma^\infty_+ B_{\gl}G) \ ,\]
the {\em stable tautological class}, defined in  \cite[(4.1.12)]{schwede:global}.
By \cite[Theorem 4.4.3]{schwede:global}, the pair
$(\Sigma^\infty_+ B_{\gl}G, e_G)$ represents the functor $\pi_0^G:\GH\to \Ab$.
\end{con}

\begin{prop}\label{prop:shifted splitting}
  Let $i:L\to K$ be a continuous homomorphism between compact Lie groups
  such that for every global functor $F$, the restriction homomorphism
  $F(i^*):F(K)\to F(L)$ is surjective.
  \begin{enumerate}[\em (i)]
  \item The morphism $i^*:K\to L$ has a section in the global Burnside category.
  \item The morphism $\Sigma^\infty_+ i:\Sigma^\infty_+ B_{\gl}L\to\Sigma^\infty_+ B_{\gl}K$
    has a retraction in the global stable homotopy category.
  \item For every compact Lie group $G$ and every global functor $F$,
    the restriction homomorphism $F((G\times i)^*):F(G\times K)\to F(G\times L)$
    is a naturally split epimorphism.
  \end{enumerate}
\end{prop}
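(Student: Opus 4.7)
The plan is to deduce (ii) and (iii) from (i) by choosing the right global functors, and to prove (i) by a Yoneda-type argument in the global Burnside category $\mathcal{A}$.

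For (i), I would apply the hypothesis to the representable global functor $F=\mathcal{A}(L,-)\colon \mathcal{A}\to \Ab$, which is additive because $\mathcal{A}$ is enriched in abelian groups. The hypothesized surjectivity of $F(i^*)\colon \mathcal{A}(L,K)\to \mathcal{A}(L,L)$, which is post-composition with $i^*$, supplies a preimage $s\in \mathcal{A}(L,K)$ of $\mathrm{id}_L$; by construction $i^*\circ s=\mathrm{id}_L$, so $s$ is a section of $i^*$ in $\mathcal{A}$.

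For (ii), I would exploit the representability of $\pi_0^G\colon \GH\to \Ab$ by the pair $(\Sigma^\infty_+ B_{\gl}G,e_G)$. Applying the hypothesis to the global functor $F=\pi_0^*(\Sigma^\infty_+ B_{\gl}L)$, the surjectivity of $i^*\colon \pi_0^K(\Sigma^\infty_+ B_{\gl}L)\to \pi_0^L(\Sigma^\infty_+ B_{\gl}L)$ lifts $e_L$ to a class $\alpha$, and representability yields a unique morphism $\rho\colon \Sigma^\infty_+ B_{\gl}K\to \Sigma^\infty_+ B_{\gl}L$ in $\GH$ with $\rho_*(e_K)=\alpha$. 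To verify that $\rho$ is a retraction of $\Sigma^\infty_+ i$, I would use representability once more: it suffices that $(\rho\circ \Sigma^\infty_+ i)_*(e_L)=e_L$. Naturality of the tautological classes in the group gives $(\Sigma^\infty_+ i)_*(e_L)=i^*(e_K)$, and $\rho_*$ commutes with the restriction $i^*$ because it is a morphism of global functors; combining these, $(\rho\circ \Sigma^\infty_+ i)_*(e_L)=\rho_*(i^*(e_K))=i^*(\rho_*(e_K))=i^*(\alpha)=e_L$, as required.

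For (iii), the strategy is a ``shift by $G$'' that reduces the statement to (i) applied to $G\times i\colon G\times L\to G\times K$. Given a global functor $F$, define $F^G\colon \mathcal{A}\to \Ab$ by $F^G(H)=F(G\times H)$, with restriction along $\alpha\colon M\to H$ equal to $F((G\times \alpha)^*)$ and transfer along a closed inclusion $M\subseteq H$ equal to the transfer in $F$ along $G\times M\subseteq G\times H$. A routine check shows that $F^G$ is again a global functor. Applying the hypothesis of the proposition to $F^G$ gives that $F^G(i^*)=F((G\times i)^*)$ is surjective for every $F$, i.e., the hypothesis of the proposition holds with $G\times i$ in place of $i$. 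Invoking (i) for this new homomorphism produces a section $t\in \mathcal{A}(G\times L,G\times K)$ of $(G\times i)^*$; for every global functor $F$, $F(t)$ is then a splitting of $F((G\times i)^*)$, and this splitting is natural in $F$ since $t$ is a fixed morphism in the Burnside category.

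The main obstacle is the verification that the shift $F^G$ is genuinely a global functor, which amounts to saying that $H\mapsto G\times H$ extends to an (additive) endofunctor of $\mathcal{A}$. This relies on elementary compatibilities of the ``product with $G$'' operation with subgroup inclusions, conjugation, quotient homomorphisms, and the double coset decompositions that enter the Mackey relations; concretely, $(G\times L_1)\backslash(G\times K)/(G\times L_2)$ is canonically identified with $L_1\backslash K/L_2$. The naturality identity $(\Sigma^\infty_+ i)_*(e_L)=i^*(e_K)$ used in (ii) is a standard property of the stable tautological classes.
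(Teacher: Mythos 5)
Your proposal is correct and follows essentially the same argument as the paper: part (i) via the represented global functor $\bA(L,-)$, part (ii) via representability of $\pi_0$ by $(\Sigma^\infty_+ B_{\gl}G,e_G)$ and the chain $(\rho\circ\Sigma^\infty_+ i)_*(e_L)=i^*(\rho_*(e_K))=e_L$, and part (iii) via the compatibility of the Burnside category with products of Lie groups. The only cosmetic difference is in (iii): you precompose $F$ with the shift $H\mapsto G\times H$ to get $F^G$, reapply the hypothesis, and then invoke (i) for $G\times i$, whereas the paper directly cites the biadditive symmetric monoidal structure on $\bA$ (its Theorem 4.2.15 and (4.2.14)) and takes $G\times\sigma$ as the section -- the same structural input, applied to the morphism $\sigma$ rather than to the functor $F$.
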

\begin{proof}
  (i) Global functors are defined as additive functors
  from the global Burnside category $\bA$ to the category of abelian groups.
  For the represented global functor $\bA(L,-)$, the hypothesis
  shows that the restriction homomorphism
  \[ i^*\circ -\ : \ \bA(L,K) \ \to\  \bA(L,L) \]
  is surjective.
  Any preimage of the identity is a section to $i^*$.

  (ii) We let $\sigma\in\bA(L,K)$ be a section to $i^*$ as provided by part (i).
  The representability property of the pair $(\Sigma^\infty_+ B_{\gl}K, e_K)$
  provides a unique morphism $\rho:\Sigma^\infty_+ B_{\gl}K\to\Sigma^\infty_+ B_{\gl}L$ in $\GH$
  such that
  \[ \pi_0^K(\rho)(e_K)\ = \ \sigma(e_L) \]
  in the group $\pi_0^K(\Sigma^\infty_+ B_{\gl}L)$. Then
  \[ \pi_0^L(\rho\circ \Sigma^\infty_+ B_{\gl}i)(e_L) \ = \
    \pi_0^L(\rho)(i^*(e_K)) \ = \ i^*(\pi_0^K(\rho)(e_K)) \ = \
    i^*(\sigma(e_L)) \ = \ e_L\ .\]
  The representability property of the pair $(\Sigma^\infty_+ B_{\gl}L, e_L)$
  thus shows that $\rho\circ \Sigma^\infty_+ B_{\gl}i$ is the identity of $\Sigma^\infty_+ B_{\gl}L$.
  So $\rho$ is the desired retraction.
  
  (iii) The global Burnside category admits a biadditive symmetric monoidal structure
  that is given by the product of Lie groups on objects, see \cite[Theorem 4.2.15]{schwede:global}.
  Moreover, $G\times i^*=(G\times i)^*$ by (4.2.14) of \cite{schwede:global}.
  Part (i) provides a section $\sigma:L\to K$ to $i^*$ in the global Burnside category $\bA$.
  So the morphism
  \[  G\times \sigma \ : \ G\times L\ \to \ G\times K\]
  is a section to $(G\times i)^*$.
  Hence for every global functor $F$, the homomorphism $F(G\times \sigma)$
  is a section to $F( (G\times i)^*)$.
\end{proof}

Theorem \ref{thm:GF splitting} and Proposition \ref{prop:shifted splitting} together
show that the global classifying space of $O(n-1)$ is globally-stably
a direct summand of the global classifying space of $O(n)$,
and similarly for the unitary and symplectic groups.
The next corollary refines this splitting and also identifies the summands
as the suspension spectra of a global Thom spaces.

\begin{con}[Global Thom spaces]
  We let $\nu_{n,\mR}$ denote the tautological real $O(n)$-representation on $\mR^n$.
  For every $n\ge 0$, we define a based orthogonal space $M(n,\mR)$ by
  \[ M(n,\mR)(V)\ = \ \bL(\nu_{n,\mR},V)_+\sm_{O(n)} S^{\nu_{n,\mR}}\ . \]
  So $M(n,\mR)$ is the global Thom space over $B_{\gl}O(n)$ of the
  global vector bundle associated to the tautological real $O(n)$-representation.
  We will show in Corollary \ref{cor:represent kernel}
  that the suspension spectrum of $M(n,\mR)$
  represents the kernel of the restriction homomorphism from $O(n)$ to $O(n-1)$.

  The inclusion $S^0\to S^{\nu_{n,\mR}}$ of the $O(n)$-fixed points induces a morphism
  of based orthogonal spaces
  \[  j \ : \  B_{\gl}O(n)_+ = \bL(\nu_{n,\mR},-)_+\sm_{O(n)} S^0\ \to\ 
    \bL(\nu_{n,\mR},-)_+\sm_{O(n)} S^{\nu_{n,\mR}}\ = \ M(n,\mR)\ . \]
  We pass to suspension spectra to obtain a morphism of orthogonal spectra
  \[  \Sigma^\infty j \ : \ \Sigma^\infty_+ B_{\gl}O(n) \ \to\ \Sigma^\infty M(n,\mR) \ .\]
  We write
  \[ w_{n,\mR}\ = \ \pi_0^{O(n)}(\Sigma^\infty j)(e_{O(n)}) \ \in \ \pi_0^{O(n)}(\Sigma^\infty M(n,\mR))\]
  for the image of the stable tautological class
  $e_{O(n)}\in \pi_0^{O(n)}(\Sigma^\infty_+ B_{\gl}O(n))$.
  
  Similarly, we define based orthogonal spaces $M(n,\mC)$ and $M(n,\mH)$ by
  \begin{align*}
    M(n,\mC)(V)\ &= \ \bL^\mC(\nu_{n,\mC},\mC\tensor_{\mR}V)_+\sm_{U(n)} S^{\nu_{n,\mC}}\text{\quad and}\\  
    M(n,\mH)(V)\ &= \ \bL^\mH(\nu_{n,\mH},\mH\tensor_{\mR}V)_+\sm_{S p(n)} S^{\nu_{n,\mH}}\ .
  \end{align*}
  Here $\nu_{n,\mC}$ is the tautological complex $U(n)$-representation on $\mC^n$,
  and  $\nu_{n,\mH}$ is the tautological quaternionic $S p(n)$-representation on $\mH^n$,
  and $\bL^{\mC}(-,-)$ and $\bL^{\mH}(-,-)$ denote the spaces of $\mC$-linear
  and $\mH$-linear isometric embeddings, respectively.
  The analogous construction as for $M(n,\mR)$ provides us with classes
  \[ w_{n,\mC} \ \in \ \pi_0^{U(n)}(\Sigma^\infty M(n,\mC))\text{\qquad and\qquad}
    w_{n,\mH}\ \in \ \pi_0^{S p(n)}(\Sigma^\infty M(n,\mH))\ .\]
\end{con}

\begin{cor}\label{cor:represent kernel}
  The pair $(\Sigma^\infty M(n,\mR), w_{n,\mR})$ represents the functor
  \[ \ker\big(\res^{O(n)}_{O(n-1)}\colon  \pi_0^{O(n)}\to \pi_0^{O(n-1)}\big) \ :
    \GH \ \to \ \Ab \ .\]
  The pair $(\Sigma^\infty M(n,\mC), w_{n,\mC})$ represents the functor
  \[ \ker\big(\res^{U(n)}_{U(n-1)}\colon  \pi_0^{U(n)}\to \pi_0^{U(n-1)}\big) \ :
    \GH \ \to \ \Ab \ .\]
  The pair $(\Sigma^\infty M(n,\mH), w_{n,\mH})$ represents the functor
  \[ \ker\big(\res^{S p(n)}_{S p(n-1)}\colon  \pi_0^{S p(n)}\to \pi_0^{S p(n-1)}\big) \ :
    \GH \ \to \ \Ab \ .\]
\end{cor}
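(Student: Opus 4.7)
The plan is to exhibit, for $\mK \in \{\mR, \mC, \mH\}$, a distinguished triangle in $\GH$ of the form
\[
\Sigma^\infty_+ B_{\gl}G(n-1) \xra{\Sigma^\infty_+ B_{\gl}i_n} \Sigma^\infty_+ B_{\gl}G(n) \xra{\Sigma^\infty j} \Sigma^\infty M(n,\mK) \to \Sigma\, \Sigma^\infty_+ B_{\gl}G(n-1),
\]
to show that this triangle splits, and then to read off the representability from the defining property of the pair $(\Sigma^\infty_+ B_{\gl}G(n), e_{G(n)})$.

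To build the triangle in the real case, I would start from the fact that $S(\nu_{n,\mR}) = S^{n-1}$ is a single $O(n)$-orbit with stabilizer $i_n(O(n-1))$ at $(0,\dots,0,1)$, so $S(\nu_{n,\mR}) \iso O(n)/i_n(O(n-1))$ as $O(n)$-spaces. The based $O(n)$-cofiber sequence $S(\nu_{n,\mR})_+ \to S^0 \to S^{\nu_{n,\mR}}$ is a pointwise closed inclusion; smashing with $\bL(\nu_{n,\mR}, V)_+$ (a free $O(n)$-space for every $V$ because $\nu_{n,\mR}$ is faithful) and passing to $O(n)$-orbits yields a pointwise cofiber sequence of based orthogonal spaces
\[
\bigl(\bL(\nu_{n,\mR}, V)/i_n(O(n-1))\bigr)_+ \to B_{\gl}O(n)(V)_+ \to M(n,\mR)(V).
\]
Since $i_n^*\nu_{n,\mR}$ is a faithful $O(n-1)$-representation and the global homotopy type of $B_{\gl}H$ is independent of the chosen faithful representation, the left-hand orthogonal space is globally equivalent to $B_{\gl}O(n-1)$; under this equivalence the first map becomes $B_{\gl}i_n$. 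Passing to suspension spectra turns the pointwise cofiber sequence into a distinguished triangle in $\GH$, and the same recipe produces the corresponding triangles over $\mC$ and $\mH$ from $S(\nu_{n,\mC}) \iso U(n)/U(n-1)$ and $S(\nu_{n,\mH}) \iso S p(n)/S p(n-1)$, using the free $U(n)$- and $S p(n)$-spaces $\bL^\mC(\nu_{n,\mC}, \mC \tensor_\mR V)_+$ and $\bL^\mH(\nu_{n,\mH}, \mH \tensor_\mR V)_+$.

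By Theorem \ref{thm:GF splitting} the map $F(i_n^*)$ is a split epimorphism for every global functor $F$, so Proposition \ref{prop:shifted splitting}(ii) provides a retraction of $\Sigma^\infty_+ B_{\gl}i_n$ in $\GH$ and the distinguished triangle splits. Applying $\GH(-,X)$ to this split triangle for an arbitrary $X \in \GH$ and invoking the representability of $\pi_0^{O(n)}$ by $(\Sigma^\infty_+ B_{\gl}O(n), e_{O(n)})$, we obtain a split short exact sequence
\[
0 \to \GH(\Sigma^\infty M(n,\mR), X) \xra{-\circ \Sigma^\infty j} \pi_0^{O(n)}(X) \xra{\res^{O(n)}_{O(n-1)}} \pi_0^{O(n-1)}(X) \to 0,
\]
whose leftmost map sends $\rho$ to $(\rho \circ \Sigma^\infty j)_*(e_{O(n)}) = \rho_*(w_{n,\mR})$. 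This is exactly the asserted representability, and the unitary and symplectic statements follow identically.

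The main technical obstacle is the identification of $\bL(\nu_{n,\mR}, -)/i_n(O(n-1))$ with $B_{\gl}O(n-1)$: one must check that the quotient map to $\bL(\nu_{n,\mR}, -)/O(n)$ models $B_{\gl}i_n$ in $\GH$ and that the pointwise cofiber sequence of orthogonal spaces descends to a genuine distinguished triangle after applying $\Sigma^\infty$ and localizing to $\GH$. Both points reduce to standard change-of-faithful-representation arguments for $B_{\gl}$.
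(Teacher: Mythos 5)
Your proof is correct and takes essentially the same approach as the paper: both build the distinguished triangle in $\GH$ from the $G(n)$-cofiber sequence $S(\nu_{n,\mK})_+ \to S^0 \to S^{\nu_{n,\mK}}$ by applying $\Sigma^\infty \bL(\nu_{n,\mK},-)_+ \sm_{G(n)} -$, and both invoke Theorem \ref{thm:GF splitting} to split the resulting long exact sequence. The only cosmetic difference is that you route the splitting through the explicit retraction of Proposition \ref{prop:shifted splitting}(ii) in $\GH$ before applying $\GH(-,X)$, whereas the paper applies $\gh{-,E}$ first and uses surjectivity of $\res^{O(n)}_{O(n-1)}$ on $E$'s homotopy groups to decompose the long exact sequence; these are equivalent. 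Your explicit identification of the bottom cell as $B_{\gl}O(n-1)$ via transitivity of the $G(n)$-action on $S(\nu_{n,\mK})$ and the change-of-faithful-representation argument makes precise a step the paper leaves implicit, which is a point in your favor.
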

\begin{proof}
  We give the argument in the orthogonal case; the unitary and symplectic cases are analogous.
  We apply the functor $\Sigma^\infty \bL(\nu_{n,\mR},-)_+\sm_{O(n)}-$ from the category
  of based $O(n)$-spaces to the category of orthogonal spectra to the
  mapping cone sequence
  \[ O(n)/O(n-1)_+ \ \to \ S^0 \ \to \ S^{\nu_{n,\mR}}\ \to \ O(n)/O(n-1)_+\sm S^1 \ .\]
  The result is a mapping cone sequence of orthogonal spectra
  \[ \Sigma^\infty_+ B_{\gl}O(n-1) \ \xra{\Sigma^\infty_+ B_{\gl}i_n} \
    \Sigma^\infty_+ B_{\gl} O(n) \ \xra{\Sigma^\infty j \ }\ 
    \Sigma^\infty M(n,\mR) \ \to \ \Sigma^\infty_+ B_{\gl}O(n-1) \sm S^1\ .\]
  Mapping cone sequences of orthogonal spectra define distinguished triangles
  in the global stable homotopy category; taking morphism groups $\gh{-,E}$
  in $\GH$ to an orthogonal spectrum $E$ turns
  the distinguished triangle into a long exact sequence of abelian groups.
  The orthogonal spectra
  $\Sigma^\infty_+ B_{\gl}O(n-1)$ and
  $\Sigma^\infty_+ B_{\gl}O(n)$ represent the functors $\pi_0^{O(n-1)}$
  and $\pi_0^{O(n)}$, respectively, by \cite[Theorem 4.4.3]{schwede:global},
  and the morphism $\Sigma^\infty_+ B_{\gl}i_n$ represents the
  restriction homomorphism $\res^{O(n)}_{O(n-1)}:\pi_0^{O(n)}(E)\to\pi_0^{O(n-1)}(E)$.
  Since the equivariant homotopy groups of $E$ are part of a global functor, the restriction
  homomorphism is surjective by Theorem \ref{thm:GF splitting}.
  So the long exact sequence decomposes into short exact sequences
  \[ 0 \ \to \ \gh{\Sigma^\infty M(n,\mR),E} \ \xra{f\mapsto f_*(w_{n,\mR})} \ \pi_0^{O(n)}(E)\ \xra{\res^{O(n)}_{O(n-1)}}\
    \pi_0^{O(n-1)}(E)\ \to \ 0 \ .\]
  This proves the claim.
\end{proof}

For the homotopy group global functor $\upi_0(\Sigma^\infty_+ B_{\gl}O(n))$,
Theorem \ref{thm:GF splitting} specializes to a splitting 
\begin{align*}
       \sum \psi_{k,n}\colon  \bigoplus_{k=0}^n \,
  \ker\big(\res^{O(k)}_{O(k-1)}:\pi_0^{O(k)}(\Sigma^\infty_+ B_{\gl}O(n))\to \pi_0^{O(k-1)}&(\Sigma^\infty_+ B_{\gl}O(n))\big)\\
  &\xra{\ \iso \ } \  \pi_0^{O(n)}(\Sigma^\infty_+ B_{\gl}O(n))\ .
\end{align*}
So there is a unique collection of classes $s_k\in \pi_0^{O(k)}(\Sigma^\infty_+ B_{\gl}O(n))$
such that $\res^{O(k)}_{O(k-1)}(s_k)=0$ for all $k=0,\dots,n$
and
\[ \sum_{k=0}^n \psi_{k,n}(s_k)\ =  \ e_{O(n)}\ . \]
Corollary \ref{cor:represent kernel} provides
a unique morphism $\Psi_{k,n}:\Sigma^\infty M(k,\mR)\to \Sigma^\infty_+ B_{\gl}O(n)$
in the global stable homotopy category such that
\[ \pi_0^{O(k)}(\Psi_{k,n})(w_{k,\mR}) \ = \ s_k \ . \]
The analogous unitary and symplectic arguments provide morphisms
$\Psi_{k,n}:\Sigma^\infty M(k,\mC)\to \Sigma^\infty_+ B_{\gl}U(n)$
and $\Psi_{k,n}:\Sigma^\infty M(k,\mH)\to \Sigma^\infty_+ B_{\gl}S p(n)$ in $\GH$.

\begin{cor}\label{cor:split_B_gl}
  For every $n\geq 0$, the morphisms
  \begin{align*}
  \bigvee \Psi_{k,n} \colon  \bigvee_{k=0,\dots,n}  \ \Sigma^\infty M(k,\mR)\
    &\to \    \Sigma_+^\infty B_{\gl}O(n)\   \ ,\\
      \bigvee  \Psi_{k,n}\colon \bigvee_{k=0,\dots,n}  \ \Sigma^\infty M(k,\mC)\
    &\to \      \Sigma_+^\infty B_{\gl}U(n) 
      \text{\qquad and}\\
   \bigvee \Psi_{k,n}\colon  \bigvee_{k=0,\dots,n}  \ \Sigma^\infty M(k,\mH)  \
    &\to \    \Sigma_+^\infty B_{\gl}S p(n)
  \end{align*}
  are isomorphisms in the global stable homotopy category.
\end{cor}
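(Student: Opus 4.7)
The plan is to apply the Yoneda lemma in the triangulated category $\GH$. First I would identify the functors represented by the source and target of $\bigvee \Psi_{k,n}$. The target $\Sigma^\infty_+ B_{\gl}G(n)$ represents $\pi_0^{G(n)}$ by \cite[Theorem 4.4.3]{schwede:global}. Combining Corollary \ref{cor:represent kernel} with the wedge--product isomorphism $\gh{\bigvee_k Z_k, E}=\prod_k \gh{Z_k,E}$ shows that the source $\bigvee_{k=0}^n \Sigma^\infty M(k,\mK)$ represents the functor $E\mapsto \bigoplus_{k=0}^n \ker\bigl(\res^{G(k)}_{G(k-1)}\colon \pi_0^{G(k)}(E)\to \pi_0^{G(k-1)}(E)\bigr)$, where the $k=0$ summand is understood as $\pi_0^e(E)$, matching the convention $F(G;0)=F(e)$.

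Next I would verify that the natural transformation $(\bigvee \Psi_{k,n})^*$ induced on these represented functors is the inverse of the splitting isomorphism $\sum \psi_{k,n}$ of Theorem \ref{thm:GF splitting}. Given $x\in \pi_0^{G(n)}(E)$, let $\hat x\colon \Sigma^\infty_+ B_{\gl}G(n)\to E$ be its classifying morphism; unwinding the definitions, the $k$-th component of $(\bigvee \Psi_{k,n})^*(x)$ is the class $\pi_0^{G(k)}(\hat x)(s_k)\in \pi_0^{G(k)}(E)$. This class automatically lies in $\ker(\res^{G(k)}_{G(k-1)})$ since $\hat x$ induces a morphism of global functors and $\res^{G(k)}_{G(k-1)}(s_k)=0$ by the defining property of $s_k$.

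The central point is that $\psi_{k,n}$, being built from transfer, inflation, and the inclusion of a restriction kernel, is natural in morphisms of global functors. Applied to $\pi_0(\hat x)\colon \upi_0(\Sigma^\infty_+ B_{\gl}G(n))\to \upi_0(E)$, this yields
\[ \sum_{k=0}^n \psi_{k,n}\bigl(\pi_0^{G(k)}(\hat x)(s_k)\bigr)\ =\ \pi_0^{G(n)}(\hat x)\Bigl(\sum_{k=0}^n \psi_{k,n}(s_k)\Bigr)\ =\ \pi_0^{G(n)}(\hat x)(e_{G(n)})\ =\ x, \]
so $(\sum \psi_{k,n})\circ (\bigvee \Psi_{k,n})^* = \Id$. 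Since $\sum \psi_{k,n}$ is an isomorphism by Theorem \ref{thm:GF splitting}, $(\bigvee \Psi_{k,n})^*$ is its two-sided inverse, hence an isomorphism of represented functors on $\GH$. The Yoneda lemma then yields that $\bigvee \Psi_{k,n}$ is itself an isomorphism in $\GH$; the unitary and symplectic cases are identical. Given the design of the $\Psi_{k,n}$ via Corollary \ref{cor:represent kernel} and the classes $s_k$ from Theorem \ref{thm:GF splitting}, the argument is essentially forced; there is no genuine obstacle beyond bookkeeping around the $k=0$ summand and confirming that the wedge corepresents the direct sum of the kernel functors.
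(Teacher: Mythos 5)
Your proof is correct and takes essentially the same route as the paper: both apply Yoneda in $\GH$, identify the represented functors via \cite[Theorem 4.4.3]{schwede:global} and Corollary \ref{cor:represent kernel}, and use Theorem \ref{thm:GF splitting} to conclude. The paper simply asserts that the composite $\bigl(\sum\psi_{k,n}\bigr)\circ(\text{eval at }w_{k,\mK})\circ\gh{\bigvee\Psi_{k,n},E}$ is evaluation at $e_{G(n)}$; your naturality argument with the identity $\sum_k\psi_{k,n}(s_k)=e_{G(n)}$ is exactly the verification of that assertion, and you also handle the $k=0$ summand more explicitly.
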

\begin{proof}
  We give the argument in the orthogonal case; the unitary and symplectic cases are analogous.
  The composite homomorphism
  \begin{align*}
    \gh{\Sigma^\infty_+ B_{\gl} O(n),E} \
    &\xra{\gh{\bigvee \Psi_{k,n},E}}\  \gh{\bigvee_{k=0,\dots,n}  \Sigma^\infty M(k,\mR),E}\\
    &\xra{\text{eval at $w_{k,\mR}$}}\  \bigoplus_{k=0,\dots,n} \ker\big(\res^{O(k)}_{O(k-1)}:\pi_0^{O(k)}(E)\to\pi_0^{O(k-1)}(E)\big)\
\xra{\sum \psi_{k,n}}\ \pi_0^{O(n)}(E)
  \end{align*}
  is evaluation at the stable tautological class $e_{O(n)}$, and hence an isomorphism by
  \cite[Theorem 4.4.3]{schwede:global}.
  In this composite, the second map is an isomorphism by
  Corollary \ref{cor:represent kernel}, and the third map is an isomorphism by 
  Theorem \ref{thm:GF splitting}.
  So the map $\gh{\bigvee \Psi_{k,n},E}$ is an isomorphism.
  Because $E$ is an arbitrary object of the global stable homotopy category, this proves the claim.
\end{proof}

If we apply the forgetful functor
\[ U\ : \ \GH \ \to \ \SH \]
from the global stable homotopy category to the non-equivariant stable homotopy category
to Corollary \ref{cor:split_B_gl},
we obtain the stable splittings due to Snaith 
\cite[Theorem 4.2]{snaith:algebraic_cobordism}, \cite[Theorem 2.2]{snaith:localied_stable}
 and Mitchell-Priddy \cite[Theorem 4.1]{mitchell-priddy:double_coset}.
If $G$ is a compact Lie group, we can apply the forgetful functor \cite[Theorem 4.5.23]{schwede:global}
\[ U_G \ : \ \GH \ \to \ G\text{-}\SH \]
from the global stable homotopy category to the genuine $G$-equivariant stable homotopy category.
This forgetful functor turns the splittings of $B_{\gl}O(n)$, $B_{\gl}U(n)$ and $B_{\gl}S p(n)$
of Corollary \ref{cor:split_B_gl}
into $G$-equivariant stable splittings of the classifying $G$-spaces
for $G$-equivariant real, complex and quaternionic vector bundles.
To the best of my knowledge, these $G$-equivariant stable splittings have not been observed before.

\section{Regularity of Euler classes}
\label{sec:regularity}

In this section we apply our splitting results
to derive the regularity of certain equivariant Euler classes 
of the global Thom spectra $\bMU$ and $\bMO$,
see Corollaries \ref{cor:Euler_regular_complex} -- \ref{cor:block_regularity_real}.

As we already mentioned, the equivariant homotopy groups
of a global spectrum (i.e., an object of the global stable homotopy category,
represented by an orthogonal spectrum)
form a graded global functor.
If the global spectrum $E$ is a global homotopy ring spectrum
(i.e., a monoid in the global stable homotopy category under the globally derived smash product),
the equivariant homotopy groups form graded rings, and for all compact Lie groups $G$ and $K$,
the groups $\pi_*^{G\times K}(E)$ are naturally a graded module over the graded ring
$\pi_*^G(E)$, via inflation along the projection $G\times K\to G$.

In the next corollary, we continue to write $\nu_{n,\mR}$,
$\nu_{n,\mC}$ and $\nu_{n,\mH}$ for the tautological representation
of $O(n)$, $U(n)$ and $S p(n)$ on $\mR^n$, $\mC^n$, and $\mH^n$, respectively. 
We write $a_{n,\mR}$ for the Euler class of $\nu_{n,\mR}$, i.e.,
the element of $\pi_0^{O(n)}(\Sigma^\infty S^{\nu_{n,\mR}})$
represented by the fixed point inclusion $S^0\to S^{\nu_{n,\mR}}$,
and similarly for $a_{n,\mC}$ and $a_{n,\mH}$.

\begin{cor}\label{cor:pre-Euler_regular}
  For every orthogonal spectrum $E$, every compact Lie group $G$ and every $n\geq 1$,
  the sequences of graded abelian groups
  \[ 0 \ \to \ \pi_{*+\nu_{n,\mR}}^{G\times O(n)}(E)\ \xra{-\cdot a_{n,\mR}}\
    \pi_*^{G\times O(n)}(E)\ \xra{\res^{G\times O(n)}_{G\times O(n-1)}}\ \pi_*^{G\times O(n-1)}(E) \ \to \ 0
  \]
    \[ 0 \ \to \ \pi_{*+\nu_{n,\mC}}^{G\times U(n)}(E)\ \xra{-\cdot a_{n,\mC}}\
    \pi_*^{G\times U(n)}(E)\ \xra{\res^{G\times U(n)}_{G\times U(n-1)}}\ \pi_*^{G\times U(n-1)}(E) \ \to \ 0
  \]
  and
    \[ 0 \ \to \ \pi_{*+\nu_{n,\mH}}^{G\times Sp(n)}(E)\ \xra{-\cdot a_{n,\mH}}\
    \pi_*^{G\times Sp(n)}(E)\ \xra{\res^{G\times Sp(n)}_{G\times Sp(n-1)}}\ \pi_*^{G\times Sp(n-1)}(E) \ \to \ 0
  \]
  are split exact. 
  If $E$ is a global homotopy ring spectrum, then the splittings
  can be chosen as homomorphisms of graded $\pi_*^G(E)$-modules. 
\end{cor}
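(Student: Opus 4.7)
The plan is to deduce the three split short exact sequences from a single $(G\times O(n))$-equivariant cofiber sequence of spaces, then to collapse the resulting long exact sequence into short exact sequences using the restriction splitting of Theorem \ref{thm:GF splitting} and Proposition \ref{prop:shifted splitting}. I give the argument in the orthogonal case; the unitary and symplectic cases are directly analogous.

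First I would consider the cofiber sequence of pointed $(G\times O(n))$-spaces
\[ (G\times O(n))/(G\times O(n-1))_+ \ \xra{p}\ S^0 \ \xra{a_{n,\mR}} \ S^{\nu_{n,\mR}}, \]
with $G$ acting trivially on all three spaces, with $a_{n,\mR}$ the fixed-point inclusion that represents the Euler class, and with $p$ collapsing the free orbit $(G\times O(n))/(G\times O(n-1))\iso O(n)/O(n-1)\iso S(\nu_{n,\mR})$ to the non-basepoint of $S^0$. This is the Puppe sequence of the cofibration $S(\nu_{n,\mR})_+\hookrightarrow D(\nu_{n,\mR})_+$, using the equivariant contractibility of the unit disk. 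Applying the graded morphism functor $[S^{*}\wedge-,E]^{G\times O(n)}$ yields a long exact sequence. I would identify the middle map $[S^{*}\wedge S^{\nu_{n,\mR}},E]^{G\times O(n)}\to [S^{*},E]^{G\times O(n)}$ as the Euler class multiplication $-\cdot a_{n,\mR}\colon \pi_{*+\nu_{n,\mR}}^{G\times O(n)}(E)\to\pi_*^{G\times O(n)}(E)$, and use the induction-restriction adjunction to obtain a natural isomorphism
\[ [S^{*}\wedge (G\times O(n))/(G\times O(n-1))_+,\, E]^{G\times O(n)}\ \iso\ \pi_*^{G\times O(n-1)}(E), \]
under which $p^{*}$ corresponds to $\res^{G\times O(n)}_{G\times O(n-1)}$, because $p$ sends the identity coset to the non-basepoint and the adjunct of a map on a free orbit is its value at the identity coset viewed with the subgroup action.

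To collapse the long exact sequence, I would apply Proposition \ref{prop:shifted splitting}(iii) to the global functor $\pi_*(E)$ and to the inclusion $i_n\colon O(n-1)\to O(n)$, using Theorem \ref{thm:GF splitting} to supply the hypothesis that $F(i_n^{*})\colon F(O(n))\to F(O(n-1))$ is surjective for every global functor $F$. This produces a natural section of $\res^{G\times O(n)}_{G\times O(n-1)}$, so the connecting homomorphisms in the long exact sequence vanish and it decomposes into the claimed split short exact sequences. For the module-linear refinement when $E$ is a global homotopy ring spectrum, the section furnished by Proposition \ref{prop:shifted splitting}(iii) has the explicit form $F(G\times\sigma)$ for a Burnside-categorical section $\sigma\in\bA(O(n-1),O(n))$ of $i_n^{*}$; since $G\times\sigma$ is an external product in the symmetric monoidal global Burnside category and the $\pi_*^G(E)$-module structure on $\pi_*^{G\times K}(E)$ is induced by external product along the $G$-factor, the resulting splitting is automatically $\pi_*^G(E)$-linear. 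I expect this last compatibility to be the main technical point: checking that the Burnside-theoretic splitting respects the external multiplicative structure amounts to a Frobenius-reciprocity argument built on the biadditive symmetric monoidal structure on $\bA$ that was already invoked in the proof of Proposition \ref{prop:shifted splitting}(iii).
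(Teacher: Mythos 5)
Your proof takes essentially the same route as the paper: the cofiber sequence $O(n)/O(n-1)_+\to S^0\to S^{\nu_{n,\mR}}$ with trivial $G$-action, applied to $(G\times O(n))$-equivariant $E$-cohomology to get a long exact sequence, which is then collapsed into short exact sequences using the split surjectivity of $\res^{G\times O(n)}_{G\times O(n-1)}$ supplied by Theorem \ref{thm:GF splitting} and Proposition \ref{prop:shifted splitting}. The paper actually omits the verification of $\pi_*^G(E)$-linearity of the splitting, whereas you correctly identify that the section $F(G\times\sigma)$ acts by transfers, restrictions and conjugations only on the second factor, so the projection formula (Frobenius reciprocity) for inflations from the $G$-factor gives the module-linearity.
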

\begin{proof}
  As usual, we prove the orthogonal case, and the unitary and symplectic cases are analogous.
  The cofiber sequence of based $O(n)$-spaces
\[ O(n)/O(n-1)_+ \ \to \ S^0 \ \xra{\text{incl}} \ S^{\nu_{n,\mR}} \ \to \ S^1\sm O(n)/O(n-1)_+\]
becomes a cofiber sequence of $(G\times O(n))$-spaces by letting $G$ act trivially.
It induces a long exact sequence in $(G\times O(n))$-equivariant $E$-cohomology
that we can interpret as a long exact sequence of $R O$-graded equivariant homotopy groups:
\[ \dots\ \to \ \pi_{*+1}^{G\times O(n-1)}(E) \ \xra{\ \partial\ } \  \pi_{*+\nu_{n,\mR}}^{G\times O(n)}(E)\ \xra{-\cdot a_{n,\mR}}\
  \pi_*^{G\times O(n)}(E)\ \xra{\res^{G\times O(n)}_{G\times O(n-1)}}\ \pi_*^{G\times O(n-1)}(E) \ \to \ \dots
\]
The restriction homomorphism $\res^{G\times O(n)}_{G\times O(n-1)}$ is split surjective
by Theorem \ref{thm:GF splitting} and Proposition \ref{prop:shifted splitting},
so the long exact sequence decomposes into short exact sequences.
\end{proof}

We let $\bMU$ denote the global Thom ring spectrum
defined in \cite[Example 6.1.53]{schwede:global}.
For every compact Lie group $G$, the underlying $G$-homotopy type of $\bMU$
is that of tom Dieck's homotopical equivariant bordism \cite{tomDieck:bordism_integrality}.
For abelian compact Lie groups, the equivariant cohomology theory represented by $\bMU$
is the universal complex-oriented equivariant cohomology theory
\cite[Theorem 1.2]{cole-greenlees-kriz:universal}.
On the family of all abelian compact Lie groups,
the equivariant homotopy groups of $\bMU$ carry the universal
global group law \cite[Theorem C]{hausmann:group_law}.

Since the global theory $\bMU$ is complex-oriented, every unitary representation
$W$ of a compact Lie group $G$ has an Euler class $e_{G,W}\in\bMU^{2 n}_G$,
where $n=\dim_\mC(W)$;
by definition, $e_{G,W}$ is the restriction of the Thom class
\[ \sigma_{G,W} \in \widetilde{\bMU}^{2 n}_G(S^W) \]
along the inclusion $S^0\to S^W$.

\begin{cor}\label{cor:Euler_regular_complex}
  For every compact Lie group $G$, every character $\chi:G\to U(1)$ and every $n\geq 1$,
 the Euler class of the $(G\times U(n))$-representation $\chi\tensor\nu_{n,\mC}$
  is a non zero-divisor in the graded-commutative ring $\bMU^*_{G\times U(n)}$.
\end{cor}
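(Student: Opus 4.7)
The plan is to reduce the assertion to the case of the trivial character, which is a direct consequence of Corollary \ref{cor:pre-Euler_regular} applied to $E=\bMU$, by transporting the Euler class through a continuous automorphism of $G\times U(n)$.

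I would begin by introducing the map
\[ \alpha\ :\ G\times U(n)\ \to\ G\times U(n)\ , \qquad (g,A)\ \longmapsto\ (g,\chi(g)\cdot A)\ .\]
Because $\chi$ is a group homomorphism, a direct calculation shows that $\alpha$ is itself a group homomorphism; its inverse sends $(g,A)$ to $(g,\chi(g)^{-1}A)$, so $\alpha$ is in fact a continuous group automorphism. Unwinding the definitions, if $\nu_{n,\mC}$ is viewed as a $(G\times U(n))$-representation by inflation along the projection to $U(n)$, then $\alpha^*\nu_{n,\mC}=\chi\tensor\nu_{n,\mC}$.

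By naturality of the Thom and Euler class constructions under restriction along continuous group homomorphisms, this identification gives
\[ \alpha^*\bigl(e_{G\times U(n),\,\nu_{n,\mC}}\bigr)\ =\ e_{G\times U(n),\,\chi\tensor\nu_{n,\mC}}\]
in $\bMU^{2 n}_{G\times U(n)}$. On the other hand, Corollary \ref{cor:pre-Euler_regular} applied to $E=\bMU$ produces a split short exact sequence witnessing that multiplication by $a_{n,\mC}$ is injective; passing through the Thom isomorphism available for the complex-oriented theory $\bMU$ translates this into the statement that the inflated Euler class $e_{G\times U(n),\,\nu_{n,\mC}}$ is a non zero-divisor in $\bMU^*_{G\times U(n)}$.

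Finally, since $\alpha$ is a group automorphism, the induced map $\alpha^*$ is a graded ring automorphism of $\bMU^*_{G\times U(n)}$; transporting the non zero-divisor property through this ring automorphism yields the claim for $e_{G\times U(n),\,\chi\tensor\nu_{n,\mC}}$. I do not anticipate any significant obstacle: the only points requiring care are the verification of the equality $\alpha^*\nu_{n,\mC}=\chi\tensor\nu_{n,\mC}$ and the naturality of the Euler class under the group automorphism $\alpha$, both of which follow immediately from the functoriality of the Thom space construction.
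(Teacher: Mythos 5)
Your proposal is correct and follows essentially the same argument as the paper: first deduce the trivial-character case from Corollary \ref{cor:pre-Euler_regular} via the Thom isomorphism, then transport the non zero-divisor property along the ring automorphism induced by the group automorphism $(g,A)\mapsto(g,\chi(g)\cdot A)$, which is exactly the map $\psi$ in the paper's proof. No gaps.
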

\begin{proof}
  We start with the special case where $\chi$ is the trivial character.
  Then the representation in question is $p^*(\nu_{n,\mC})$, the restriction
  of the tautological $U(n)$-representation along the projection $G\times U(n)\to U(n)$.
  The equivariant Thom isomorphism identifies the group
  $\pi_{k+p^*(\nu_{n,\mC})}^{G\times U(n)}(\bMU)=\widetilde{\bMU}^0_{G\times U(n)}(S^{k+ p^*(\nu_{n,\mC})})$
  with the group $\bMU^{-k-2 n}_{G\times U(n)}$, in a way that takes multiplication by
  the class $a_{n,\mC}$
  to multiplication by the Euler class of the representation $p^*(\nu_{n,\mC})$.
  Corollary \ref{cor:pre-Euler_regular} thus shows that the Euler class of 
  $p^*(\nu_{n,\mC})$ is a non zero-divisor.

  In the general case, the map
  \[ \psi \ : \ G\times U(n)\ \to \ G\times U(n)\ , \quad \psi(g,A)\ = \ (g,\chi(g)\cdot A) \]
  is an isomorphism of Lie groups,
  and $\chi\tensor\nu_{n,\mC}$ is the restriction of the representation $p^*(\nu_{n,\mC})$ along $\psi$.
  Restriction along $\psi$ is an isomorphism of graded rings
  \[ \psi^*\ : \ \bMU^*_{G\times U(n)} \ \to \ \bMU^*_{G\times U(n)}\]
  that sends the Euler class of $p^*(\nu_{n,\mC})$ to the Euler class of
  $\psi^*(p^*(\nu_{n,\mC}))=\chi\tensor\nu_{n,\mC}$.
  Since the Euler class of $p^*(\nu_{n,\mC})$ is a non zero-divisor by the first part,
  the Euler class of $\chi\tensor\nu_{n,\mC}$ is a non zero-divisor, too.
\end{proof}

  The special case $G=e$ of Corollary \ref{cor:Euler_regular_complex}
  shows that the Euler class of the
  tautological complex $U(n)$-representation $\nu_{n,\mC}$ is a non zero-divisor in the ring
  $\bMU^*_{U(n)}$. The following corollary generalizes this.

\begin{cor}\label{cor:block regularity}
  For all $k_1,\dots,k_m\geq 1$ with $k_1+\dots+ k_m=n$,
  the Euler class of the tautological representation of
  the group $U(k_1)\times\dots\times U(k_m)$ on $\mC^n$
  is a non zero-divisor in the graded ring $\bMU^*_{U(k_1)\times\dots\times U(k_m)}$.
\end{cor}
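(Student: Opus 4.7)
The plan is to prove by induction on $m$ the following slight strengthening of the corollary, to which it reduces upon setting $K=e$: \emph{for all $k_1,\dots,k_m\geq 1$ and every compact Lie group $K$, the Euler class of the $(U(k_1)\times\dots\times U(k_m)\times K)$-representation obtained by inflating the tautological representation of $U(k_1)\times\dots\times U(k_m)$ on $\mC^{k_1+\dots+k_m}$ along the projection that forgets $K$ is a non zero-divisor in $\bMU^*_{U(k_1)\times\dots\times U(k_m)\times K}$.} The base case $m=1$ is Corollary~\ref{cor:Euler_regular_complex} applied with $G=K$, $n=k_1$, and trivial character, after a harmless reordering of factors.

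For the inductive step, set $G'=U(k_1)\times\dots\times U(k_{m-1})$ and decompose the tautological $(U(k_1)\times\dots\times U(k_m))$-representation on $\mC^{k_1+\dots+k_m}$ as an internal direct sum $V'\oplus W$, where $V'$ is the tautological $G'$-representation on $\mC^{k_1+\dots+k_{m-1}}$ (with $U(k_m)$ acting trivially) and $W=\nu_{k_m,\mC}$ is the tautological $U(k_m)$-representation (with $G'$ acting trivially). After further inflation to $G'\times U(k_m)\times K$, multiplicativity of Euler classes for direct sums gives $e_{V'\oplus W}=e_{V'}\cdot e_W$ in the graded-commutative ring $\bMU^*_{G'\times U(k_m)\times K}$. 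Since the product of two commuting non zero-divisors in a graded-commutative ring is again a non zero-divisor, it suffices to handle each factor separately.

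The class $e_W$ is a non zero-divisor by Corollary~\ref{cor:Euler_regular_complex} applied with ambient group $G'\times K$, tautological dimension $n=k_m$, and trivial character. The class $e_{V'}$ is a non zero-divisor by the strengthened inductive hypothesis applied to the tuple $(k_1,\dots,k_{m-1})$ with the auxiliary factor $K$ replaced by $U(k_m)\times K$; this completes the induction.

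The main conceptual point the strengthening addresses is the following. A naive induction on $m$ with $K$ kept equal to the trivial group would reduce to showing that inflation along the projection $G'\times U(k_m)\to G'$ sends non zero-divisors to non zero-divisors. I do not see how to extract this directly from Corollary~\ref{cor:pre-Euler_regular}: iterating its splittings exhibits $\bMU^*_{G'\times U(k_m)}$ as a $\bMU^*_{G'}$-module only through a recursion whose right-hand side still contains $\bMU^*_{G'\times U(k_m)}$ itself, so freeness (or even flatness) of this module does not fall out. Carrying the auxiliary factor $K$ along in the induction bypasses this difficulty altogether, because the $U(k_m)$ peeled off at the inductive step is simply absorbed into $K$.
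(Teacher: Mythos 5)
Your proof is correct and rests on the same two ingredients as the paper's: the decomposition of the tautological $\bigl(U(k_1)\times\dots\times U(k_m)\bigr)$-representation as $p_1^*(\nu_{k_1,\mC})\oplus\dots\oplus p_m^*(\nu_{k_m,\mC})$, multiplicativity of Euler classes, and Corollary~\ref{cor:Euler_regular_complex}. However, the paper does not induct on $m$: it applies Corollary~\ref{cor:Euler_regular_complex} \emph{directly} to each factor $e\bigl(p_i^*(\nu_{k_i,\mC})\bigr)$, taking the group $G$ in that corollary to be $\prod_{j\neq i}U(k_j)$, which immediately exhibits every factor as a non zero-divisor in $\bMU^*_{U(k_1)\times\dots\times U(k_m)}$. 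The auxiliary group $K$ that you carry through your induction is exactly the free variable $G$ already present in Corollary~\ref{cor:Euler_regular_complex}, so the ``naive'' obstruction you describe in your last paragraph --- needing inflation along $G'\times U(k_m)\to G'$ to preserve non zero-divisors --- simply never comes up if you invoke that corollary with the right choice of $G$ for each factor, and the induction can be dispensed with.
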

\begin{proof}
  The tautological representation of $U(k_1)\times\dots\times U(k_m)$
 splits as a direct sum
 \[  p_1^*(\nu_{k_1,\mC})\oplus\dots\oplus p_m^*(\nu_{k_m,\mC})\ , \]
  where $p_i:U(k_1)\times\dots\times U(k_m)\to U(k_i)$ is the projection to the $i$-th factor.
  The Euler class of a direct sum is the product of the Euler classes, so
  \[ e_{U(k_1)\times\dots\times U(k_m),\nu_{n,\mC}}\ = \ p_1^*(\nu_{k_1,\mC})\cdot\ldots\cdot p_m^*(\nu_{k_m,\mC})\ . \]
  Each factor is a non zero-divisor
  by Corollary \ref{cor:Euler_regular_complex}, hence so is the product.
\end{proof}

Corollaries \ref{cor:Euler_regular_complex} and \ref{cor:block regularity}
work more generally for all globally complex-oriented
homotopy-commu\-ta\-tive global homotopy-ring spectra,
i.e., commutative monoids, under derived smash product,
in the global stable homotopy category that come equipped with coherent
and natural Thom isomorphisms for equivariant complex vector bundles.\medskip

We let $\bMO$ denote the global Thom ring spectrum
defined in \cite[Example 6.1.7]{schwede:global}.
For every compact Lie group $G$, the underlying $G$-homotopy type of $\bMO$
is the real analog of tom Dieck's homotopical equivariant bordism \cite{tomDieck:bordism_integrality}.
By a theorem of Br{\"o}cker and Hook \cite[Theorem 4.1]{brocker-hook},
the $G$-equivariant homology theory represented by $\bMO$ is stable equivariant bordism.
Restricted to elementary abelian 2-groups,
the equivariant homotopy groups of $\bMO$ carry the universal
global 2-torsion group law \cite[Theorem D]{hausmann:group_law}.

Since the global theory $\bMO$ is real-oriented, every orthogonal representation $V$
of a compact Lie group $G$ has an Euler class $e_{G,V}\in\bMO^n_G$,
where $n=\dim_\mR(V)$. The analogous arguments as in the complex case in
Corollaries \ref{cor:Euler_regular_complex} and 
\ref{cor:block regularity} prove the following real counterparts.

\begin{cor}\label{cor:Euler_regular_real}
  For every compact Lie group $G$, every continuous homomorphism $\chi:G\to O(1)$ and every $n\geq 1$,
  the Euler class of the $(G\times O(n))$-representation $\chi\tensor\nu_{n,\mR}$
  is a non zero-divisor in the graded-commutative ring $\bMO^*_{G\times O(n)}$.
\end{cor}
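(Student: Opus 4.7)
My plan is to follow verbatim the two-step strategy used for the complex case in Corollary \ref{cor:Euler_regular_complex}, with the real-oriented Thom isomorphism for $\bMO$ replacing the complex-oriented Thom isomorphism for $\bMU$.

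First I would treat the special case where $\chi$ is the trivial homomorphism. Then $\chi\tensor\nu_{n,\mR}$ is isomorphic to $p^*(\nu_{n,\mR})$, the pullback of the tautological representation along the projection $p\colon G\times O(n)\to O(n)$. Since $\bMO$ is real-oriented, the equivariant Thom isomorphism identifies
\[ \pi_{k+p^*(\nu_{n,\mR})}^{G\times O(n)}(\bMO)\ = \ \widetilde{\bMO}^0_{G\times O(n)}(S^{k+p^*(\nu_{n,\mR})})\ \iso \ \bMO^{-k-n}_{G\times O(n)} \]
in a way that takes multiplication by the universal class $a_{n,\mR}\in\pi_0^{O(n)}(\Sigma^\infty S^{\nu_{n,\mR}})$ to multiplication by the Euler class of $p^*(\nu_{n,\mR})$. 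The orthogonal case of Corollary \ref{cor:pre-Euler_regular}, applied to the orthogonal spectrum $\bMO$, says that $-\cdot a_{n,\mR}$ is an injection on $\pi_*^{G\times O(n)}(\bMO)$. Hence the Euler class of $p^*(\nu_{n,\mR})$ is a non zero-divisor in $\bMO^*_{G\times O(n)}$.

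For the general case I would reduce to the trivial homomorphism via the continuous group automorphism
\[ \psi \ :\ G\times O(n)\ \to \ G\times O(n)\ ,\qquad \psi(g,A)\ =\ (g,\chi(g)\cdot A)\ ,\]
which is well-defined because $\chi(g)\in\{\pm 1\}\subset O(n)$ acts by a central scalar, and is its own inverse. The pullback $\psi^*(p^*(\nu_{n,\mR}))$ is precisely $\chi\tensor\nu_{n,\mR}$, so the induced graded ring automorphism $\psi^*$ of $\bMO^*_{G\times O(n)}$ sends the Euler class of $p^*(\nu_{n,\mR})$ to the Euler class of $\chi\tensor\nu_{n,\mR}$. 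Since non-zero-divisors are preserved by ring isomorphisms, the general case follows from the trivial-character case.

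There is no genuine obstacle here; the only points to check are that $\bMO$ admits the Thom isomorphism and Euler classes used in Corollary \ref{cor:Euler_regular_complex} (immediate from its real-orientation and the construction of $e_{G,V}$ recalled before the corollary), and that the automorphism $\psi$ makes sense, which uses only that $O(1)$ lies in the center of $O(n)$ as scalar matrices. The combinatorial heart of the argument, namely the split surjectivity of $\res^{G\times O(n)}_{G\times O(n-1)}$, was already established in Theorem \ref{thm:GF splitting} and Proposition \ref{prop:shifted splitting} and feeds in through Corollary \ref{cor:pre-Euler_regular}.
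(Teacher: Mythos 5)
Your proposal is correct and reproduces exactly the argument the paper intends: the paper gives no separate proof for the real case, stating only that ``the analogous arguments as in the complex case \ldots{} prove the following real counterparts,'' and you have carried out that analogy faithfully, with the real-oriented Thom isomorphism shifting degree by $n$ rather than $2n$ and with the observation that $O(1)=\{\pm 1\}$ lies in the center of $O(n)$ so that $\psi$ is a well-defined automorphism.
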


\begin{cor}\label{cor:block_regularity_real}
  For all $k_1,\dots,k_m\geq 1$ with $k_1+\dots+ k_m=n$,
  the Euler class of the tautological representation of
  the group $O(k_1)\times\dots\times O(k_m)$ on $\mR^n$
  is a non zero-divisor in the graded ring $\bMO^*_{O(k_1)\times\dots\times O(k_m)}$.
\end{cor}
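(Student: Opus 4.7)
The plan is to mirror exactly the argument used to deduce Corollary \ref{cor:block regularity} from Corollary \ref{cor:Euler_regular_complex}, substituting the real Thom spectrum $\bMO$ for $\bMU$ and Corollary \ref{cor:Euler_regular_real} for Corollary \ref{cor:Euler_regular_complex}. The result is really just a formal consequence of the real analog of the rank-one splitting combined with multiplicativity of Euler classes.

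First I would decompose the tautological representation of $O(k_1)\times\dots\times O(k_m)$ on $\mR^n$ as the internal orthogonal direct sum
\[ p_1^*(\nu_{k_1,\mR})\oplus\dots\oplus p_m^*(\nu_{k_m,\mR})\ , \]
where $p_i\colon O(k_1)\times\dots\times O(k_m)\to O(k_i)$ is the projection onto the $i$-th factor. Since $\bMO$ is real-oriented, Euler classes are multiplicative under direct sums of orthogonal representations, and so the Euler class of the tautological representation equals the product
\[ e_{O(k_1)\times\dots\times O(k_m),\,\nu_{n,\mR}}\ = \ e_{O(k_1)\times\dots\times O(k_m),\,p_1^*(\nu_{k_1,\mR})}\cdot\ldots\cdot e_{O(k_1)\times\dots\times O(k_m),\,p_m^*(\nu_{k_m,\mR})}\ . \]

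Next I would apply Corollary \ref{cor:Euler_regular_real} for each $i\in\{1,\dots,m\}$, taking the ambient compact Lie group $G$ of that corollary to be the product $\prod_{j\neq i}O(k_j)$, the rank to be $k_i$, and $\chi$ to be the trivial homomorphism; after a harmless reordering of factors, this shows that the Euler class of $p_i^*(\nu_{k_i,\mR})$ is a non zero-divisor in $\bMO^*_{O(k_1)\times\dots\times O(k_m)}$.

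Finally, in any graded-commutative ring a finite product of non-zero-divisors is again a non-zero-divisor, so the product displayed above is itself a non zero-divisor, completing the proof. No serious obstacle is expected: multiplicativity of real-oriented Euler classes is automatic, and the heavy lifting (reducing the individual non-zero-divisor property to the split surjectivity of a restriction homomorphism via Corollary \ref{cor:pre-Euler_regular} and Theorem \ref{thm:GF splitting}) is already packaged in Corollary \ref{cor:Euler_regular_real}.
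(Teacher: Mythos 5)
Your proposal is correct and is precisely the argument the paper has in mind: the text preceding Corollary~\ref{cor:block_regularity_real} says the real statements follow from ``the analogous arguments as in the complex case,'' and your proof is the exact real analog of the paper's proof of Corollary~\ref{cor:block regularity} — same decomposition into the $p_i^*(\nu_{k_i,\mR})$, same appeal to multiplicativity of Euler classes, same use of the rank-one regularity statement (here Corollary~\ref{cor:Euler_regular_real} with trivial $\chi$ and the complementary factors absorbed into $G$), and the same closing observation about products of non-zero-divisors. You also correctly flag the harmless point that applying Corollary~\ref{cor:Euler_regular_real} requires moving $O(k_i)$ to the last slot of the product, which the paper's terse complex proof leaves implicit.
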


Corollaries \ref{cor:Euler_regular_real} and \ref{cor:block_regularity_real}
work more generally for all
globally real-oriented homotopy-commutative global homotopy-ring spectra,
i.e., commutative monoids, under derived smash product,
in the global stable homotopy category that come equipped with coherent
and natural Thom isomorphisms for equivariant real vector bundles.

\section{Alternating, special orthogonal and special unitary groups}\label{sec:nonex}

The families of alternating groups, special orthogonal groups and special unitary groups
have the same kind of structure as the symmetric, orthogonal, unitary and symplectic groups;
so one might wonder about the existence of splittings for the values of global functors
at $A_n$, $S O(n)$ and $S U(n)$. In this section we complete the picture by showing
that the restriction homomorphisms between adjacent groups in these
families do {\em not} split naturally, except in some low-dimensional cases
and for half of the special orthogonal groups.

In \cite{henn-mui}, Dold's method is adapted to obtain
non-equivariant stable splittings of the classifying spaces of
alternating, special orthogonal and special unitary groups
after localizing at specific primes.
Since these are {\em not} integral splittings
and they don't involve adjacent groups from the respective family,
the results are coarser than those of Dold, Snaith and Mitchell-Priddy
for $\Sigma_n$, $O(n)$, $U(n)$ and $S p(n)$.

\begin{eg}[Alternating groups]
  The standard embeddings $i_3:e=A_2\to A_3$ and $i_4:A_3\to A_4$ admit unique retractions by group
  homomorphisms; so for every global functor $F$,
  the restriction homomorphisms $F(i_3^*):F(A_3)\to F(A_2)$
  and $F(i_4^*):F(A_4)\to F(A_3)$ are naturally split by the corresponding inflation homomorphisms.
  
  For $n\geq 5$, the restriction homomorphism $F(i_n^*)$ is not in general surjective.
  For $n\geq 5$ and $n\ne 6,8$, the complex representation ring global functor
   \cite[Example 4.2.8 (iv)]{schwede:global} is a witness.
  Indeed, for such $n$, there are non-conjugate elements of $A_{n-1}$
  that become conjugate in $A_n$.
  There is thus a complex representation of $A_{n-1}$ whose character
  takes different values on these elements, and the class of this representation
  cannot be in the image of the restriction homomorphism
  $\res^{A_n}_{A_{n-1}}:R U(A_n)\to R U(A_{n-1})$.

  The two remaining cases can be settled by group cohomology with mod-2 and mod-3 coefficients:
  the map $i_6^*:H^3(A_6,\mF_2)\to H^3(A_5,\mF_2)$ is not surjective,
  and the map  $i_8^*:H^2(A_8,\mF_3)\to H^2(A_7,\mF_3)$ is not surjective.
\end{eg}

\begin{eg}[Special orthogonal groups]
  For the special orthogonal groups,
  Theorem \ref{thm:GF splitting} implies a natural splitting in half of the cases,
  ultimately because the group $O(2 m-1)$ is isomorphic to $\{\pm 1\}\times S O(2 m-1)$.
  Indeed,  the continuous homomorphism $r:O(2m-1)\to S O(2m-1)$ defined by $r(A)=\det(A)\cdot A$
  is a retraction to the inclusion.
  So for every global functor $F$, 
  inflation along $r$ is a natural section to
  restriction from $F(O(2m-1))$ to $F(SO(2m-1))$.
  In combination with Theorem \ref{thm:GF splitting}, this shows that
  restriction from $F(O(2m))$ to $F(SO(2m-1))$ is naturally split;
  hence also the restriction homomorphism
  \[ F(i_{2m}^*) \colon F(S O(2 m))\to F(S O(2 m-1)) \]
  is a naturally split epimorphism.
  Because $F(i_{2m}^*)$ is surjective, the restriction homomorphism
  \[ F(i_{2m}^*\circ i_{2m+1}^*) \colon F(S O(2 m+1))\to F(S O(2 m-1)) \]
  is also surjective, and hence a naturally split epimorphism
  by Proposition \ref{prop:shifted splitting}.
  The group $F(S O(2 m+1))$ then naturally splits as
  \[  F(S O(2 m+1))\ \iso \ \bigoplus_{k=0,\dots, m}
    \ker\left( \res^{S O(2 k+1)}_{SO (2 k-1)}:F(SO(2k+1))\to F(SO(2k-1))\right) \ ,\]
  with the interpretation that the summand for $k=0$ is the value at the trivial group $S O(1)$.
  The underlying non-equivariant stable splitting of $B S O(2m+1)$
  goes back to Snaith \cite[Theorem 4.3]{snaith:algebraic_cobordism}.
  
  In the case of opposite parities, the map
  $B i_{2m+1}:B S O(2 m)\to B S O(2 m +1)$
  induced by the standard embedding is not even stably split
  in the non-equivariant sense.
  So its global analog cannot split in the
  global stable homotopy category, either, and the corresponding restriction homomorphism
  for global functors are not generally surjective, compare Proposition \ref{prop:shifted splitting}.
  To see this, we recall that 
  the mod-2 cohomology of $B S O(k)$ is a polynomial algebra
  on the Stiefel-Whitney classes $w_2,\dots, w_k$,
  and for $k=2m+1$, the relation $Sq^1(w_{2m})=w_{2m+1}$ holds.
  Since $w_{2m+1}$ is in the kernel of the restriction homomorphism
  \[ \res^{S O(2m +1)}_{S O(2m)}\ : \ H^*(B S O(2m+1),\mF_2)\ \to \ H^*(B S O(2 m),\mF_2) \ , \]
  but $w_{2m}$ restricts non-trivially,
  this restriction does not admit a section that is linear over the mod-2 Steenrod algebra.
\end{eg}

\begin{eg}[Special unitary groups]
  As the group $S U(1)$ is trivial, the standard embedding $i_2:S U(1)\to S U(2)$
  has a retraction by a continuous homomorphism, and for every global functor $F$,
  the corresponding restriction homomorphism $F(i_3^*):F(S U(2))\to F(S U(1))$
  is a naturally split epimorphism.
  
  For $n\geq 3$, restriction homomorphism $F(i_n^*):F(S U(n))\to F(S U(n-1))$
  is not in general surjective. For $n=3$, the relation
  $Sq^2(\bar c_2)=\bar c_3$ in  $H^*(B S U(3),\mF_2)$
  shows that the map $B i_3:B S U(2)\to B S U(3)$
  is not even stably split in the non-equivariant sense;
  here $\bar c_k$ is the mod-2 reduction of the $k$-th Chern class.
  For $n\geq 4$, the Burnside ring global functor $\mA=\bA(e,-)$
  \cite[Example 4.2.8 (i)]{schwede:global}
  is a witness that there is no natural splitting, i.e., the map
  \[ \res^{S U(n)}_{S U(n-1)} \ : \ \mA(S U(n))\ \to \ \mA(S U(n-1))\]
  is not surjective.
  A specific element that is not in the image can be obtained as follows.
  The {\em reduced natural representation} of the alternating group $A_n$
  is the $(n-1)$-dimensional complex vector space
  \[\{ (x_1,\dots,x_n)\in\mC^n \ : \ x_1+\dots+x_n=0\} \]
  with $A_n$-action by permutation of coordinates.
  This action is faithful and by isometries of determinant 1.
  A choice of orthonormal basis identifies $A_n$ with a subgroup of $S U(n-1)$,
  well-defined up to conjugacy. For $n\geq 4$, the Weyl group of $A_n$ in $S U(n-1)$
  is finite, and the transfer $\tr_{A_n}^{S U(n-1)}(1)$ is an element of infinite order
  in $\mA(S U(n-1))$ that is not the restriction of any class in $\mA(S U(n))$.
\end{eg}

\end{document}